\newtheorem{defn}{Definition}
\newtheorem{prop}[defn]{Proposition}
\newtheorem{thm}[defn]{Theorem}
\newtheorem{lem}[defn]{Lemma}
\newtheorem{rem}[defn]{Remark}
\newcommand{\mathmacro}[1]{#1\CustomizeMathJax{#1}}
\DeclareMathOperator{\ad}{ad}
\DeclareMathOperator{\Lie}{Lie}
\newcommand{\mb}[1]{\mathbf{#1}}
\newcommand{\Hom}{\mathrm{Hom}}
\newcommand\blfootnote[1]{%
    \bgroup
    \renewcommand\thefootnote{\fnsymbol{footnote}}%
    \renewcommand\thempfootnote{\fnsymbol{mpfootnote}}%
    \footnotetext[0]{#1}%
    \egroup
}
\providecommand{\bysame}{\leavevmode\hbox to3em{\hrulefill}\thinspace}
\numberwithin{equation}{section}
\numberwithin{table}{section}
\numberwithin{defn}{section}
\title{A New Maximal Subgroup of $E_8$ in Characteristic $3$}
\author{David A.\ Craven, David I.\ Stewart and Adam R. Thomas}
\date{5th July, 2021}
\newcommand{\Addresses}{{% additional braces for segregating \footnotesize
  \bigskip
  \footnotesize

  \textsc{David A.\ Craven, School of Mathematics, University of Birmingham, Edgbaston, Birmingham, B15 2TT, United Kingdom}\par\nopagebreak
  \textit{E-mail address}: \texttt{d.a.craven@bham.ac.uk}

  \medskip

  \textsc{David I.\ Stewart, School of Mathematics, Statistics and Physics, Herschel Building, Newcastle University, Newcastle, NE1 7RU, United Kingdom}\par\nopagebreak
  \textit{E-mail address}: \texttt{david.stewart@ncl.ac.uk}

  \medskip

  \textsc{Adam R.\ Thomas, Mathematics Institute, Zeeman Building, University of Warwick, Coventry, CV4 7AL, United Kingdom}\par\nopagebreak
  \textit{E-mail address}: \texttt{Adam.R.Thomas@warwick.ac.uk}

}}
\begin{document}
\maketitle

\begin{abstract}
We prove the existence and uniqueness up to conjugacy of a new maximal subgroup of the algebraic group of type $E_8$ in characteristic $3$. This has type $F_4$, and was missing from previous lists of maximal subgroups produced by Seitz and Liebeck--Seitz. We also prove a result about the finite group $H={}^3\!D_4(2)$, namely that if $H$ embeds in $E_8$ (in any characteristic $p$) and has two composition factors on the adjoint module then $p=3$ and $H$ lies in a conjugate of this new maximal $F_4$ subgroup.
\end{abstract}
\section{Introduction}

\blfootnote{The first author wishes to thank the Royal Society for financial support during the course of
this research.}
The classification of the maximal subgroups of positive dimension of exceptional algebraic groups \cite{liebeckseitz2004} is a cornerstone of group theory. In the course of understanding subgroups of the finite groups $E_8(q)$ in \cite{craven2019un}, the first author ran into a configuration that should not occur according to the tables in \cite{liebeckseitz2004}.

We elicit a previously undiscovered maximal subgroup of type $F_4$ of the algebraic group $E_8$ over an algebraically closed field of characteristic $3$. This discovery corrects the tables in \cite{liebeckseitz2004}, and the original source \cite{seitz1991} on which it depends.

\begin{thm}\label{thm:newf4} Let $\mb G$ be a simple algebraic group of type $E_8$ over an algebraically closed field $k$ of characteristic $3$. Then $\mb G$ contains a unique conjugacy class of simple maximal subgroups of type $F_4$. 

If $\mb X$ is in this class, then the restriction of the adjoint module $L(\mb G)$ to $\mb X$ is isomorphic to $L_{\mb X}(1000)\oplus L_{\mb X}(0010)$, where the first factor is the adjoint module for $\mb X$ of dimension $52$ and the second is a simple module of dimension $196$ for $\mb X$.
\end{thm}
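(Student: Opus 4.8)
The plan is to treat existence, maximality and uniqueness separately, with the module structure of $L(E_8)\downarrow\mb X$ falling out along the way. \emph{Possible restrictions.} A maximal subgroup $\mb X\cong F_4$ cannot be contained in a parabolic of $\mb G$, so it is $\mb G$-irreducible, and I would first bound the composition factors of $V:=L(E_8)\downarrow\mb X$. The elementary constraints are $\dim V=248$, self-duality of every $F_4$-module, the fact that $V$ has the adjoint module $\Lie(\mb X)\cong L_{\mb X}(1000)$ (which has dimension $52$ in characteristic $3$) as a submodule, and the requirement that the multiset of weights of $V$ for a maximal torus of $\mb X$ be compatible with the embedding of that torus into a maximal torus of $\mb G$ together with the $E_8$ root system. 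I would sharpen this by the standard ``semisimple element'' bookkeeping: for a few well-chosen semisimple $s\in\mb X$ of order prime to $3$, the $\mb G$-class of $s$ lies in a known finite list, which fixes the eigenvalue multiplicities of $s$ on $L(E_8)$, and these must agree with those of a non-negative integer combination of Brauer characters of $F_4$-modules. This leaves a short list of candidate restrictions; among them $L_{\mb X}(1000)\oplus L_{\mb X}(0010)$ is the only one with no trivial submodule, the others — in particular the restriction carried by the non-maximal $F_4<G_2F_4$, which has many trivial composition factors — being excluded for a maximal subgroup by the centraliser argument below.

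\emph{Existence.} I would produce $\mb X$ by realising $\mathfrak g:=\Lie(E_8)$ over $\overline{\mathbb F}_3$ as the $F_4$-module $L_{\mb X}(1000)\oplus L_{\mb X}(0010)$ equipped with an explicit $F_4$-equivariant alternating bracket. Writing $\mathfrak x=L_{\mb X}(1000)$ and $M=L_{\mb X}(0010)$, any such bracket must satisfy $[\mathfrak x,\mathfrak x]\subseteq\mathfrak x$ and $[\mathfrak x,M]\subseteq M$, so the only freedom lies in the component $\wedge^2 M\to\mathfrak x\oplus M$, which lives in a small, explicitly computable space $\Hom_{\mb X}(\wedge^2 M,\mathfrak x\oplus M)$. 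Imposing the Jacobi identity should pin the bracket down up to scalars; one then identifies the resulting Lie algebra with $\mathfrak e_8$, for instance by exhibiting a Chevalley basis, and the $F_4$-action gives an embedding $\mb X=F_4\hookrightarrow E_8$ with the required restriction. An equivalent and in practice more direct route is to write down explicit generators of $\mb X$ inside $E_8(\overline{\mathbb F}_3)$, guided by the copy of $F_4(q)$ in $E_8(q)$ that motivated this paper, and then to verify that the Zariski closure is a group of type $F_4$, and not larger, using the prescribed module structure.

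\emph{Maximality.} Given $\mb X$ with $L(E_8)\downarrow\mb X\cong L_{\mb X}(1000)\oplus L_{\mb X}(0010)$, the two summands are non-isomorphic irreducibles, so the $\mb X$-submodules of $\mathfrak g$ are exactly $0$, $\mathfrak x$, $M$ and $\mathfrak g$. Moreover $M$ is not a subalgebra: otherwise, since $[\mathfrak x,M]\subseteq M$, it would be a proper nonzero ideal of the simple Lie algebra $\mathfrak e_8$. Hence the only $\mb X$-invariant subalgebras of $\mathfrak g$ are $\mathfrak x$ and $\mathfrak g$, so if $\mb X\le\mb H\le\mb G$ with $\mb H$ closed connected then $\Lie(\mb H)\in\{\mathfrak x,\mathfrak g\}$ and therefore $\mb H\in\{\mb X,\mb G\}$. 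Since $V$ has no trivial composition factor, $\mathfrak g^{\mb X}=0$, so $C_{\mb G}(\mb X)$ is finite; as both summands are non-isomorphic irreducibles, any $c\in C_{\mb G}(\mb X)$ acts as a scalar on each, and compatibility with the Lie bracket forces the scalar on $\mathfrak x$ to be $1$ and that on $M$ to be $\pm1$. The value $-1$ is impossible, for then $c$ would be an involution with $\dim C_{\mb G}(c)=\dim\mathfrak x=52$, whereas involutions of $E_8$ have centralisers of type $D_8$ or $A_1E_7$. Thus $C_{\mb G}(\mb X)=1$, and since $\mathrm{Out}(F_4)=1$ we get $N_{\mb G}(\mb X)=\mb X$; combined with the statement about $\mb H$ this shows $\mb X$ is maximal.

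\emph{Uniqueness.} The centraliser argument also forces a maximal $F_4$ to have restriction $L_{\mb X}(1000)\oplus L_{\mb X}(0010)$: a candidate restriction with a trivial submodule would give a positive-dimensional centraliser, hence a proper connected overgroup. To conclude that all subgroups with this restriction are $\mb G$-conjugate, I would verify that $H^1(F_4,L_{\mb X}(0010))=0$ in characteristic $3$; this makes the embedding infinitesimally rigid, so the variety of subalgebra embeddings $\mathfrak f_4\hookrightarrow\mathfrak e_8$ inducing the prescribed module structure is smooth of the expected dimension $248-52$ at each of its points, hence — once its connectedness is established — a single $\mb G$-orbit. Alternatively one could descend to the finite groups $E_8(q)$ and appeal to the known structure of their subgroups isomorphic to $F_4(q)$. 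I expect the principal obstacles to be the composition-factor classification via traces of semisimple elements, the explicit determination of the equivariant bracket together with the verification of the Jacobi identity, and the cohomology vanishing $H^1(F_4,L_{\mb X}(0010))=0$.
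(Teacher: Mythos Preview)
Your maximality argument is essentially the paper's: once $L(E_8)\downarrow\mb X=L_{\mb X}(1000)\oplus L_{\mb X}(0010)$, the only $\mb X$-invariant subalgebras are $\mathfrak x$ and $\mathfrak g$, so any connected $\mb H$ between $\mb X$ and $\mb G$ equals one of them. Your centraliser computation is also fine.

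For \emph{existence} your route diverges from the paper's. You propose to build an $F_4$-equivariant bracket on $L_{\mb X}(1000)\oplus L_{\mb X}(0010)$ directly, impose Jacobi, and then recognise $\mathfrak e_8$. The paper instead goes through the finite group $H={}^3\!D_4(2)$ (sitting in the Thompson group inside $E_8(3)$): it shows that the $52$-dimensional $H$-summand is forced to carry an $\mathfrak f_4$-structure, then proves that every $H$-equivariant alternating product on $M_1\oplus M_2$ is already $F_4$-equivariant. Five of the six pieces of $\Hom(\Lambda^2M,M)$ are handled by hand; the last, $\Hom(\Lambda^2 M_2,M_2)$, is a genuine obstacle and is done by computer over $F_4(3)$ or $F_4(9)$, after which Liebeck--Seitz produces the positive-dimensional $F_4$. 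Your ``Jacobi pins it down, then identify with $\mathfrak e_8$'' is exactly the hard content here, and the paper's experience suggests you would not avoid a machine check on the $\Lambda^2(L(0010))\to L(0010)$ component.

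For \emph{uniqueness} there is a genuine gap. Even granting $H^1(F_4,L_{\mb X}(0010))=0$, rigidity only tells you that every $\mb G$-orbit on the variety of such embeddings is open; you still need connectedness, which you flag but do not supply, and which is not easy in characteristic $p$. The paper bypasses this entirely with a short, concrete argument: take the involution $t\in\mb X$ with $C_{\mb X}(t)=B_4$; its trace on $L(E_8)$ forces $C_{\mb G}(t)=D_8$, and the restriction of $L(1000)\oplus L(0010)$ to $B_4$ is $(L(0001)\oplus L(0100))\oplus(L(0010)\oplus L(1001))$, of dimensions $16,36,84,112$. This identifies the $B_4$ with a known, unique $E_8$-class (the subgroup $E_8(\#45)$ of Thomas's tables). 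Since $16+36=52$ and $84+112=196$ is the only way to reassemble these into the two $F_4$-summands, any $F_4$ containing this $B_4$ must stabilise the same $52$-dimensional subspace, so there is at most one such $F_4$. This gives uniqueness with no cohomology and no deformation theory; it is both shorter and complete, and I would recommend replacing your rigidity plan with it.
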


(Here we use the notation $L_{\mb X}(\lambda)$ to refer to the highest-weight module with highest weight $\lambda$ for the algebraic group $\mb X$.)

The classification from \cite{liebeckseitz2004} states that the maximal subgroups of positive dimension are maximal-rank or parabolic subgroups, or one of a short list of reductive subgroups that exist for all but a few small primes, together with $G_2$ inside $F_4$ for $p=7$. This last case arises from a generic embedding of $G_2$ in $E_6$, which falls into $F_4$ on reduction modulo the prime $p=7$ only. This new $F_4$ subgroup of $E_8$ is therefore the only example of a maximal subgroup that exists for a single prime, whose embedding cannot be explained using generic phenomena.

The error in \cite{seitz1991} leading to this new maximal subgroup does not propagate into any other arguments or proofs. The mistake is in calculating the multiplicities of the non-negative $\mathbf T$-weights of the highest-weight modules $L_{\mb X}(0010)$ and $L_{\mb X}(1000)$ in the proof of (15.4), where $\mathbf T$ is a $1$-dimensional torus of a hypothesized maximal subgroup $\mb X=F_4$ of $\mb G = E_8$. The $\mathbf T$-weights on $L(\mb G)$ are determined by a labelling of the Dynkin diagram of $\mb G$ with $0$s and $2$s (a \textit{labelled diagram}). The argument rules out the existence of such a maximal subgroup $\mb X$ with $L(\mb G) \downarrow X = L_{\mb X}(0010) + L_{\mb X}(1000)$ by showing that the $\mathbf T$-weights on $L(\mb G)$ do not come from such a labelled diagram. See \cite[pp.13--16]{seitz1991} for more details about $\mathbf T$, its weights and labelled diagrams. The correct multiplicities for non-negative $\mathbf T$-weights on $L_{\mb X}(0010) + L_{\mb X}(1000)$ are as follows: $$30, 28, 26^2, 24^3, 22^4, 20^5, 18^7, 16^7, 14^9, 12^{10}, 10^{12}, 8^{12}, 6^{14}, 4^{14}, 2^{15}, 0^{16}.$$ There is a labelled diagram which yields these $\mathbf T$-weights, so no contradiction is obtained. 

We note that we have not traced all applications of the theorems in \cite{seitz1991} and \cite{liebeckseitz2004}, for which there are currently 52 and 46 citations respectively according to MathSciNet. Many will be unaffected, or require an extra case to be considered and/or included in final results.   

\medskip

The structure of this note is as follows. Throughout, we let $\mb G$ be a simple algebraic group of type $E_8$ over an algebraically closed field $k$ of characteristic $p$ and let its Lie algebra be denoted by $L(E_8) = \mathfrak{e}_8$. In Section \ref{s:sec2}, we provide a proof of Theorem \ref{thm:newf4}, showing the existence and uniqueness up to conjugacy of a maximal subgroup $\mb X$ of type $F_4$ in $\mb G$ when $p=3$. To do this we first prove that there is an $\mathfrak f_4$ Lie subalgebra of $\mathfrak{e}_8$ that is normalized by the finite group $H = {}^3\!D_4(2)$ and go on to show that there must be a positive-dimensional subgroup $\mb X <  \mb G$ containing $H$, and moreover that $\mb X$ must be a maximal subgroup of type $F_4$. In Section \ref{s:sec3}, a direct construction of $\mb X < \mb G$ is given by providing expressions for the root groups of $\mb X$ in
terms of the root groups of $\mb G$. This can be used to provide an alternative proof of the existence part of Theorem \ref{thm:newf4}. More precisely, we provide a Chevalley basis of the Lie subalgebra $\mathfrak f_4$ from above, and exponentiating this provides another alternative proof of the existence part of Theorem \ref{thm:newf4}. 

%Throughout, we let $\mb G$ be a simple algebraic group of type $E_8$ over an algebraically closed field $k$ of characteristic $3$. The existence and uniqueness up to conjugacy of a maximal Lie subalgebra $\mathfrak f_4\subset \mathfrak e_8$ is first established. From this we are able to write down the root elements of $\mathfrak f_4$; exponentiation gives expressions for the root groups of $\mb X$ in terms of the root groups of $\mb G$, providing an explicit construction of $\mb X$.

In the final section we determine various results providing extra details on this new class of maximal subgroups. For each unipotent class in $\mb X$ we determine the corresponding unipotent class in $\mb G$ that contains it, and we do the same for nilpotent orbits of the corresponding Lie algebras. We also consider the maximal connected subgroups of $\mb X$. The maximal parabolic subgroups of $\mb X$ will be contained in parabolic subgroups of $\mb G$ by the Borel--Tits Theorem. There are four classes of reductive maximal connected subgroups of $\mb X$ when $p=3$ with types $B_4, A_1 C_3, A_1 G_2, A_2A_2$. We show that all of these classes are contained in other maximal connected subgroups of $\mb G$ and we specify such an overgroup. Moreover, we determine that the first three classes are $\mb G$-irreducible but the last class is not. (A subgroup is $\mb H$-irreducible for some connected reductive algebraic group $\mb H$ if it is not contained in any proper parabolic subgroup of $\mb H$.)

In establishing the existence of $\mb X$, we prove the following extra result, of use in the project to classify maximal subgroups of the finite exceptional groups of Lie type.

\begin{prop} Let $H$ be the group ${}^3\!D_4(2)$, let $p$ be a prime, and suppose that $H$ embeds in the algebraic group $E_8$ in characteristic $p$. If the composition factors of the action of $H$ on the adjoint module $L(E_8)$ have dimensions $52$ and $196$, then $p=3$ and $H$ is contained in a maximal subgroup $\mb X$ of type $F_4$; furthermore, $H$ and $\mb X$ stabilize the same subspaces of $L(E_8)$.
\end{prop}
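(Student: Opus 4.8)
The plan is to force $H$ into a maximal connected subgroup of $\mbG$ and then eliminate every possibility except $\mb X$, using that the composition factors of $L(E_8)|_H$ have dimensions only $52$ and $196$ --- in particular, that there is no trivial composition factor. I begin by collecting the facts I need about $H={}^3\!D_4(2)$ from its ($p$-modular) character tables: $H$ is simple with trivial Schur multiplier, the smallest dimension of a non-trivial irreducible $H$-module is $8$ in characteristic $2$ and at least $25$ in every other characteristic, and in characteristic $2$ there is no $52$-dimensional irreducible $H$-module. Hence the hypothesis forces $p\neq 2$, and $H$ has no faithful module of dimension at most $24$; in particular $H$ embeds into none of $\mathrm{SL}_n,\mathrm{SO}_n,\mathrm{Sp}_n$ for $n\le 24$, nor into $G_2$. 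Since ${}^3\!D_4(2)$ is known not to be Lie primitive in $E_8$ in any characteristic \cite{craven2019un}, $H$ lies in a maximal connected subgroup $\mb M$ of $\mbG$.

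Next I run through the maximal connected subgroups of $E_8$ in characteristic $p$ from \cite{liebeckseitz2004}, augmented by $\mb X$ when $p=3$ as in Theorem~\ref{thm:newf4}. If $\mb M$ is a proper parabolic with Levi subgroup $\mb L$ and unipotent radical $\mb Q$, then $H\cap\mb Q=1$ since $H$ is not a $p$-group, so $H$ embeds into $\mb L$ via $\mb M\to\mb L$; but $\Lie(Z(\mb L)^{\circ})$ is a non-zero trivial submodule of $L(E_8)|_{\mb L}$, so $L(E_8)|_H$ acquires a trivial composition factor, contrary to hypothesis. Thus $\mb M$ is reductive. The bound on the faithful dimension of $H$ together with simplicity of $H$ shows that $H$ does not embed into any reductive maximal connected subgroup that lacks a factor of type $E_6$, $E_7$ or $F_4$: a faithful module for any other simple factor has dimension at most $16$ (passing to a classical quotient when the factor is a spin or adjoint group, which is harmless since the Schur multiplier of $H$ is trivial), and $H$ has none. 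Among the reductive maximal connected subgroups with such a factor, those of type $A_1E_7$, $A_2E_6$ and $G_2F_4$ force $H$ to be trivial on the complementary factor, so the Lie algebra of that factor is a non-zero trivial summand of $L(E_8)|_H$ (of dimension $3$, $8$ and $14$ respectively) --- again impossible. The only survivor is $\mb M=\mb X$ of type $F_4$, which occurs only when $p=3$; hence $p=3$, and by the uniqueness clause of Theorem~\ref{thm:newf4}, $H$ lies in a conjugate of $\mb X$.

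Finally, by Theorem~\ref{thm:newf4} we have $L(E_8)|_{\mb X}=L_{\mb X}(1000)\oplus L_{\mb X}(0010)$, a direct sum of two non-isomorphic simple $\mb X$-modules, so the only $\mb X$-submodules of $L(E_8)$ are $0$, $L_{\mb X}(1000)$, $L_{\mb X}(0010)$ and $L(E_8)$. The composition-factor hypothesis forces $L_{\mb X}(1000)|_H$ and $L_{\mb X}(0010)|_H$ to be irreducible, and they are non-isomorphic, having different dimensions; hence the $H$-submodules of $L(E_8)$ are exactly these same four subspaces, and $H$ and $\mb X$ stabilize precisely the same subspaces of $L(E_8)$. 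I expect the main work to lie in the sieve of the second paragraph --- verifying the decomposition of $L(E_8)$ over each maximal connected subgroup and treating the small-rank reductive classes occurring for $p\ge 5$ --- together with the minor point that the subgroup classification being invoked is the one this paper is correcting, so one should remark that the correction is confined to adding $\mb X$ in characteristic $3$ and leaves the Lie-primitivity and Levi inputs untouched; the representation-theoretic facts about ${}^3\!D_4(2)$ quoted at the outset (minimal faithful dimension, and absence of a $52$-dimensional module in characteristic $2$) are the other inputs that must be read off from the modular character tables.
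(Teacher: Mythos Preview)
Your approach is fundamentally different from the paper's, and within the paper's logical structure it is circular. The paper does not deduce the proposition from Theorem~\ref{thm:newf4}; rather, the proposition is the engine that drives the proof of Theorem~\ref{thm:newf4}. The paper's argument is constructive: it shows directly that the $52$-dimensional $kH$-submodule $M_1$ of $L(E_8)$ is a Lie subalgebra isomorphic to $\mathfrak f_4$ (because $\Hom_{kH}(\Lambda^2(M_1),M_2)=0$ makes $M_1$ a subalgebra, non-abelianness follows since $\mathfrak e_8$ has no $52$-dimensional abelian subspace, $H$-irreducibility then forces semisimplicity, and $\Hom_{kH}(\Lambda^2(M_1),M_1)=k$ together with the known embedding $H\hookrightarrow F_4$ pins down the isomorphism type), whence $p=3$ since $\mathfrak f_4$ admits a $196$-dimensional simple restricted module only in that characteristic. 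The overgroup $\mb X\cong F_4$ is then produced by verifying that every $H$-invariant alternating product on $M_1\oplus M_2$ is already invariant under the copy of $F_4$ in $\mathrm{GL}_{248}(k)$ containing $H$; the $E_8$-bracket is one such product, so this $F_4$ (more precisely $F_4(3)$ or $F_4(9)$ first, and then the algebraic group via \cite[Proposition~6.8]{craven2019un} or \cite[Proposition~2]{liebeckseitz1998}) lands in $E_8$ and stabilizes exactly $M_1$ and $M_2$.

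Your sieve, by contrast, takes as inputs both Theorem~\ref{thm:newf4} and the assertion that the only omission from \cite{liebeckseitz2004} is the new $F_4$---which is precisely what the paper is establishing---and, more seriously, the claim that ${}^3\!D_4(2)$ is already known not to be Lie primitive in $E_8$. That last claim is essentially the content of the proposition: prior to this paper the tables of \cite{liebeckseitz2004} offered no positive-dimensional overgroup for an $H$ with these composition factors, and \cite{craven2019un} is where that anomaly was detected, not where a containing subgroup was exhibited, so you cannot simply import non-Lie-primitivity from there. If one were prepared to regard Theorem~\ref{thm:newf4} as proved independently---say by directly verifying the explicit root-group presentation in Section~3 together with the $B_4$-based uniqueness argument---then your elimination would indeed go through and would give a clean alternative derivation; but as a self-contained proof of the proposition it assumes its own conclusion.
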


\section{From the Thompson group to \texorpdfstring{$F_4$}{F4}} \label{s:sec2}

Let $p$ be an odd prime and let $k$ be an algebraically closed field of characteristic $p$. From the end of Lemma \ref{l:f4subalgebra} onwards we will assume that $p=3$.

One path to a construction of the $F_4$ subgroup of $E_8$ starts with the Thompson group (acting irreducibly on $L(E_8)$ \cite{smith1976}), which contains a copy of $H\cong {}^3\!D_4(2)$, acting on $L(E_8)$ with composition factors of dimensions $52$ and $196$ (using the trace information in \cite[p.176]{atlas} and \cite[p.251]{abc}). In fact, we will show that every $H$-invariant alternating bilinear form on the $248$-dimensional module is invariant under a suitable copy of $F_4\leq \mathrm{GL}_{248}(k)$, where $k$ is algebraically closed and of characteristic $3$.

We cannot quite show this without a computer. Splitting $L(E_8)$ up as the sum of $52$ and $196$ fragments the space of alternating forms into six components. For five of these six we can show that the $H$-invariant maps are $F_4$-invariant, but for the sixth we cannot do so without a computer. With a computer we can check that this sixth component is at least $F_4(9)$-invariant, and thus every subgroup $H$ of $\mb G$ is contained in a copy of $F_4(9)$. But $F_4(9)$ contains elements of order $6562$, and thus there is an $F_4$ subgroup of $\mb G$ containing it, via \cite[Proposition~2]{liebeckseitz1998} and Lemma \ref{l:itisF4} below.

We then show that this $F_4$ subgroup is unique up to $\mb G$-conjugacy, obtaining as a by-product that $H$ is unique up to $\mb G$-conjugacy.

\medskip

We start with a copy $J$ of the Thompson sporadic simple group. This has a $248$-dimensional self-dual simple module $M$ over $\mathbb C$ (it is a minimal faithful representation), and it remains simple upon reduction modulo all primes. From \cite[p.176]{atlas}, we see that there are elements of order $9$ with Brauer character value $5$ on $M$. The only integers that are the traces of semisimple elements of order $9$ in $\mb G$ (on the adjoint module) are $-1$, $2$, $8$ and $29$. (These were computed using the algorithm in \cite[Appendix]{litterick}, see Table \ref{t:trace9}.)
\begin{table}
\begin{center}
\begin{tabular}{cccc}
\hline Trace of $x^3$ & Trace of $x$ & Trace of $x^3$ & Trace of $x$
\\\hline 
$77$ & $56\lambda_1+\lambda_2+134$& $-4$ & $ 56 \lambda_1+ 28 \lambda_2+ 56$
\\ & $26\lambda_1+25\lambda_2+53$& & $ 35 \lambda_1+ 28 \lambda_2+ 38$
\\ & $20\lambda_1+\lambda_2+35$& & $ 26 \lambda_1+ 16 \lambda_2+ 20$
\\ & $8\lambda_1+7\lambda_2+8$& & $ 11 \lambda_1+ 10 \lambda_2+ 20$
\\ & $2\lambda_1+\lambda_2+8$& & $ 8 \lambda_1+ \lambda_2+ 11$
\\ & $2\lambda_1+\lambda_2-1$ & & $ 2 \lambda_1+ \lambda_2+ 2$
\\ \hline$5$& $ 54 \lambda_1+ 27 \lambda_2+ 80$& $24$ &$64 \lambda_1+ 14 \lambda_2+ 92$
\\ & $ 45 \lambda_1+ 27 \lambda_2+ 38$&&$43 \lambda_1+ 32 \lambda_2+ 56$
\\ & $ 24 \lambda_1+ 21 \lambda_2+ 32$&&$34 \lambda_1+ 20 \lambda_2+ 29$
\\ & $ 18 \lambda_1+ 9 \lambda_2+ 17$&&$16 \lambda_1+ 8 \lambda_2+ 29$
\\ & $ 9 \lambda_1+ 9 \lambda_2+ 11$&&$13 \lambda_1+ 20 \lambda_2+ 29$
\\ & $ 6 \lambda_1+ 3 \lambda_2+ 5$&&$13 \lambda_1+ 8 \lambda_2+ 11$
\\ & $29$&&$4 \lambda_1+ 5 \lambda_2+ 11$
\\ & $8$&&$4 \lambda_1+ 2 \lambda_2+ 2$
\\ & $2$&&$\lambda_1+ 2 \lambda_2+ 2$
\\ & $-1$&&
\\\hline
\end{tabular}\end{center}
\caption{List of traces of semisimple elements $x$ of order $9$ in $E_8$ on $L(E_8)$. Here, $\omega_9$ is a primitive $9$th root of unity, $\lambda_1=\omega_9+\omega_9^{-1}$ and $\lambda_2=\omega_9^2+\omega_9^{-2}$. Traces are given up to algebraic conjugacy.}
\label{t:trace9}
%\begin{center}
%\begin{tabular}{cc}
%\hline Trace of $x^3$ & Trace of $x$
%\\\hline 
%$77$ & $56\lambda_1+\lambda_2+134$
%\\ & $26\lambda_1+25\lambda_2+53$
%\\ & $20\lambda_1+\lambda_2+35$
%\\ & $8\lambda_1+7\lambda_2+8$
%\\ & $2\lambda_1+\lambda_2+8$
%\\ & $2\lambda_1+\lambda_2-1$ 
%\\ \hline $24$ &$64 \lambda_1+ 14 \lambda_2+ 92$
%\\&$43 \lambda_1+ 32 \lambda_2+ 56$
%\\&$34 \lambda_1+ 20 \lambda_2+ 29$
%\\&$16 \lambda_1+ 8 \lambda_2+ 29$
%\\&$13 \lambda_1+ 20 \lambda_2+ 29$
%\\&$13 \lambda_1+ 8 \lambda_2+ 11$
%\\&$4 \lambda_1+ 5 \lambda_2+ 11$
%\\&$4 \lambda_1+ 2 \lambda_2+ 2$
%\\&$\lambda_1+ 2 \lambda_2+ 2$
%\\ \hline$5$& $ 54 \lambda_1+ 27 \lambda_2+ 80$
%\\ & $ 45 \lambda_1+ 27 \lambda_2+ 38$
%\\ & $ 24 \lambda_1+ 21 \lambda_2+ 32$
%\\ & $ 18 \lambda_1+ 9 \lambda_2+ 17$
%\\ & $ 9 \lambda_1+ 9 \lambda_2+ 11$
%\\ & $ 6 \lambda_1+ 3 \lambda_2+ 5$
%\\ & $29$
%\\ & $8$
%\\ & $2$
%\\ & $-1$
%\\ \hline $-4$ & $ 56 \lambda_1+ 28 \lambda_2+ 56$
%\\ & $ 35 \lambda_1+ 28 \lambda_2+ 38$
%\\ & $ 26 \lambda_1+ 16 \lambda_2+ 20$
%\\ & $ 11 \lambda_1+ 10 \lambda_2+ 20$
%\\ & $ 8 \lambda_1+ \lambda_2+ 11$
%\\ & $ 2 \lambda_1+ \lambda_2+ 2$
%\\ \hline 
%\end{tabular}\end{center}
\end{table} Thus these elements cannot be semisimple, and in particular, $p\mid 9$. Thus we see that if $J$ embeds in the algebraic group $\mb G$ in any characteristic $p$, then $p=3$. It is a famous result \cite{smith1976} that $J$ does indeed embed in $E_8(3)$, and is unique up to conjugacy.

It is well known that $J$ contains a subgroup $H$ isomorphic to ${}^3\!D_4(2)$. From \cite[p.90]{atlas} and \cite[pp.251--253]{abc}, we see that in characteristic not $2$, the restriction of $M$ to $H$ is the direct sum of a $52$-dimensional simple module $M_1$ and a $196$-dimensional simple module $M_2$ (the sum is direct since $M$ is self-dual). However, all elements of order $9$ in $H$ act on $M_1\oplus M_2$ with trace $2$, so we cannot use the previous method to show that $H$ cannot embed in $\mb G$ in characteristic $p\neq 2,3$ acting on $L(\mb G)$ as $M_1\oplus M_2$.

\begin{lem} \label{l:f4subalgebra} Let $p$ be an odd prime, let $H$ denote the group ${}^3\!D_4(2)$, and suppose that $H$ embeds in the algebraic group $\mb G$ over an algebraically closed field $k$ of characteristic $p$, acting on the adjoint module with composition factors of dimensions $52$ and $196$. Then the $52$-dimensional submodule carries the structure of a Lie algebra of type $F_4$, and in addition $p=3$. Furthermore, such an $\mathfrak f_4$-subalgebra of $\mathfrak{e}_8$ does exist for $p=3$.
\end{lem}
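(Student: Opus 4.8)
The plan is to determine $L(E_8)$ as an $H$-module, then to promote the $52$ to a subalgebra, then to show that this subalgebra is simple, and finally to identify it with $\mathfrak f_4$ — at which point $p=3$ drops out. The last step is where the real work lies.

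First I would fix the $H$-module structure. Since $-w_0=1$ for $E_8$, every irreducible $\mb G$-module is self-dual, and $L(E_8)$ is irreducible in every characteristic, so $L(E_8)|_H$ is self-dual. If it were not semisimple it would be uniserial with two layers, and then comparing its socle with the socle of its dual would force its two composition factors to have equal dimension, which is false. Hence $L(E_8)|_H\cong M_1\oplus M_2$, where $M_1$ is the (unique) simple $H$-module of dimension $52$ and $M_2$ that of dimension $196$; in particular $M_1$ is a well-defined $H$-invariant subspace of $\mathfrak e_8$. The Lie bracket of $\mathfrak e_8$ is $\mb G$-equivariant, hence $H$-equivariant, so its image on $\Lambda^2 M_1$ is an $H$-submodule of $\mathfrak e_8$; were this image to meet $M_2$ nontrivially it would contain $M_2$, making $M_2$ a quotient of $\Lambda^2 M_1$. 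But I would check from the character theory of ${}^3\!D_4(2)$ — ordinary characters when $p\nmid|H|$, and the $p$-modular decomposition matrices when $p\in\{3,7,13\}$ — that $\Hom_H(\Lambda^2 M_1,M_2)=0$. Therefore $[M_1,M_1]\subseteq M_1$: the $52$ is a subalgebra $\mathfrak g$ of $\mathfrak e_8$, and it is non-abelian because $\mathfrak e_8$ has no abelian subalgebra of dimension $52$ (by Mal'cev's classification the largest has dimension $36$).

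Next, $\mathfrak g$ is simple. Its solvable radical is preserved by every automorphism, hence is $H$-invariant, hence is $0$ or $\mathfrak g$; it cannot be $\mathfrak g$, since otherwise $[\mathfrak g,\mathfrak g]$ would be a proper nonzero $H$-submodule of the $H$-irreducible module $\mathfrak g$. So $\mathfrak g$ has no nonzero abelian ideal, and its socle is then a nonzero $H$-invariant subspace, hence equals $\mathfrak g$. Writing $\mathfrak g$ (the sum of its minimal ideals) as a direct sum $S_1\oplus\cdots\oplus S_r$ of minimal ideals, each $S_i$ is actually simple as a Lie algebra: an ideal of $S_i$ has zero bracket with the other summands, so it is an ideal of $\mathfrak g$ lying inside the minimal ideal $S_i$. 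Now $H$ permutes the $S_i$, transitively because $\mathfrak g$ is $H$-irreducible, so they are pairwise isomorphic and $r\cdot\dim S_1=52$; since a simple Lie algebra has dimension at least $3$ we get $r\leq 17$, but ${}^3\!D_4(2)$ has no proper subgroup of index less than $819$, so $r=1$ and $\mathfrak g$ is simple of dimension $52$.

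It remains to identify $\mathfrak g$. For $p\geq5$ the classification of finite-dimensional simple Lie algebras over $\bar{\mathbb F}_p$ is available, and no algebra of Cartan type or Melikian type has dimension $52$, so $\mathfrak g\cong\mathfrak f_4$; then $H$ embeds in $\mathrm{Aut}(\mathfrak f_4)=F_4$ acting irreducibly on the adjoint module, which does not happen for $p\geq5$, since by the classification of finite subgroups of $F_4$ the group ${}^3\!D_4(2)$ sits inside $F_4$ only in characteristics $2$ and $3$. Hence $p=3$. In characteristic $3$ the classification of simple Lie algebras is not available, so to pin $\mathfrak g$ down I would instead compute $\dim_k\Hom_H(\Lambda^2 M_1,M_1)=1$, so that $\mathfrak g$ is the unique nonzero Lie-algebra structure on $M_1$ compatible with $H$ up to isomorphism, and then exhibit an $\mathfrak f_4$ inside $\mathfrak e_8$ realizing it. That exhibition is the "furthermore" clause, obtained from the celebrated embedding of the Thompson group, hence of $H$, into $E_8$ in characteristic $3$: the subspace $M_1$ produced above is a simple $52$-dimensional Lie algebra, and one identifies it with $\mathfrak f_4$ concretely, by writing down a set of root elements as in the construction of the next section. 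The main obstacle is exactly this last identification in characteristic $3$, where the classification of simple Lie algebras cannot be quoted: the conclusion must be forced by the one-dimensionality of the space of $H$-equivariant brackets on $M_1$ together with an explicit construction of an $F_4$-subgroup of $E_8$ (which also discharges the "furthermore" assertion). The remaining ingredients — self-duality of $L(E_8)|_H$ and the $\Hom$-space computations for ${}^3\!D_4(2)$ — are comparatively routine, though a machine is convenient for some of them.
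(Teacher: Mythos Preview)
Your argument has a factual error that breaks the case $p\geq5$. You claim that ${}^3\!D_4(2)$ embeds in the algebraic group $F_4$ only in characteristics $2$ and $3$, and use this to contradict the identification $\mathfrak g\cong\mathfrak f_4$ you have just made. But this is false: ${}^3\!D_4(2)$ embeds in $F_4$ in \emph{every} odd characteristic (and already over $\mathbb C$), acting irreducibly on both the $26$- and the $52$-dimensional modules --- this is precisely the fact the paper uses, citing \cite{craven2020un}. So no contradiction arises, and nothing in your outline excludes $p\geq5$. (Incidentally, invoking the Block--Wilson--Premet--Strade classification to identify a $52$-dimensional simple algebra is enormous overkill, and your assertion that no Cartan-type algebra has dimension $52$ would itself need checking, e.g.\ against $W(4;\underline 1)$ when $p=13$.)

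The paper's route to $p=3$ is different and, crucially, uses the $196$-dimensional complement, which your outline never touches. First one shows $\Hom_{kH}(\Lambda^2 M_1,M_1)=k$ for \emph{all} odd $p$; combined with the embedding $H\leq F_4$ (which exists for all odd $p$), this already identifies the subalgebra as $\mathfrak f_4$ uniformly, without any classification of simple Lie algebras and without your simplicity argument. The characteristic is then forced by the action of this $\mathfrak f_4$ on $M_2$: one argues that $M_2$ is a simple restricted $\mathfrak f_4$-module, hence by Curtis's theorem comes from a simple rational module for the algebraic group $F_4$, and L\"ubeck's tables show $F_4$ has a $196$-dimensional simple module only when $p=3$. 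That is where the genuine obstruction to $p\geq5$ lives.
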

\begin{proof} Let $M_1$ denote the $52$-dimensional $kH$-submodule of the adjoint module for $\mb G$ and $M_2$ the $196$-dimensional submodule. Note that $|H|=2^{12}\cdot 3^4\cdot 7^2\cdot13$, so either $p\nmid |H|$ and we are essentially in characteristic $0$, or $p=7,13$, or $p=3$.

From \cite[Table 1.1]{craven2020un}, there is a unique conjugacy class of subgroups $H$ in $F_4$ for any odd characteristic $p$, acting irreducibly on the minimal and adjoint modules. In particular, we see from \cite[Section 4.3.4]{craven2020un} that $\Hom_{kH}(\Lambda^2(M_1),M_1)$ is $1$-dimensional for all odd primes $p$. From an ordinary character calculation, we see that
\[ \Lambda^2(\chi_{52})=\chi_{52}+\chi_{1274},\]
where $\chi_i$ is the irreducible character of $H$ of degree $i$. For $p=7,13$, the reduction modulo $p$ of $\chi_{1274}$ is irreducible (see \cite[pp.252--253]{abc}), hence the reduction modulo $p$ is irreducible modulo $p$ for all $p>3$ (as $p=7,13$ are the only primes greater than $3$ dividing $|H|$). Thus for $p>3$, if $H$ embeds in $\mb G$ with the claimed composition factors then the $52$-dimensional summand is a Lie subalgebra of $L(\mb G)$.

On the other hand, if $p=3$ then $\chi_{1274}$ has Brauer character constituents of degrees $52$ and $1222$ from \cite[p.251]{abc}. Since $\Hom_{kH}(\Lambda^2(M_1),M_1)=k$, we see that the exterior square is uniserial, with layers of dimensions $52$, $1222$ and $52$. Moreover,
\[\Hom_{kH}(\Lambda^2(M_1),M_2)=0.\]
Thus again $M_1$ forms an $H$-invariant subalgebra. Thus for all odd primes $p$, $M_1$ is an $H$-invariant Lie subalgebra of $L(\mb G)$ of dimension $52$.

Moreover, $M_1$ must be non-abelian for all $p$, since $\mathfrak{e}_8$ contains no abelian subspace of dimension $52$ by \cite[Proposition~2.3]{craven2021un}. Furthermore, as $M_1$ is irreducible for $H$, the restriction of the Lie bracket to $M_1$ furnishes it with the structure of a semisimple Lie algebra. Since $\Hom_{kH}(\Lambda^2(M_1),M_1)=k$, there is at most one isomorphism class of such, but as the algebraic $k$-group $F_4$ does contain a subgroup isomorphic to $H$, acting as $\chi_{52}$ on its adjoint module, it follows that $M_1\cong\mathfrak{f}_4$. In particular, this means that the $\mathfrak f_4$ Lie algebra must have a simple module of dimension $196$, as it acts $H$-equivariantly on $L(\mb G)$.

Such a simple module must be restricted: if not the $p$-closure $L_p$ of the image $L$ of $\mathfrak f_4$ in $\mathfrak e_8$ will contain a non-trivial centre \cite[2.5.8(2)]{SF88}; but $L$ has no $1$-dimensional submodules on $\mathfrak e_8$. By Curtis's theorem, $M_2$ arises by differentiation of a restricted representation for the algebraic group $F_4$, whence $p=3$, from the tables in \cite{luebeck2001}.

The embeddings of ${}^3\!D_4(2)$ and the $\mathfrak{f}_4$-subalgebra do exist for $p=3$ via the Thompson group, as seen above.
\end{proof}

For the rest of this paper we therefore assume that $p=3$.

We will prove that the $\mathfrak f_4$-subalgebra is the Lie algebra of an $F_4$ algebraic subgroup of $\mb G$. To do so, we will actually prove that every $H$-invariant alternating product on the $kH$-module $M=M_1\oplus M_2$ for $p=3$ is also $F_4$-invariant, for the unique $F_4\leq \mathrm{GL}_{52}(k)$ containing $H$. To do so, we need to understand the space
\[ \Hom_{kH}(\Lambda^2(M),M)\]
of alternating products on $M$. Using $M=M_1\oplus M_2$, and the formula
\[ \Lambda^2(A\oplus B)\cong \Lambda^2(A)\oplus \Lambda^2(B)\oplus A\otimes B,\]
we split the space of products up into six components. The next result gives the dimensions of these components.

\begin{prop} We have
\[ \Hom_{kH}(\Lambda^2(M_1),M_1)=k,\qquad \Hom_{kH}(\Lambda^2(M_1),M_2)=0,\]
\[ \Hom_{kH}(\Lambda^2(M_2),M_1)=k,\qquad \Hom_{kH}(\Lambda^2(M_2),M_2)=k,\]
\[ \Hom_{kH}(M_1\otimes M_2,M_1)=0,\qquad \Hom_{kH}(M_1\otimes M_2,M_2)=k.\]
\end{prop}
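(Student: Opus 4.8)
The plan is to read off the two values involving $\Lambda^2(M_1)$ from \cite{craven2020un}, exactly as in the proof of the previous lemma (where $\Lambda^2(\chi_{52})=\chi_{52}+\chi_{1274}$ and the mod-$3$ constituents of $\chi_{1274}$ have degrees $52$ and $1222$, neither equal to $196$), and to obtain the remaining four values by pairing a character-theoretic upper bound with a Lie-theoretic lower bound.

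For the upper bounds I would use that $M_1$ and $M_2$ are the reductions modulo $3$ of the ordinary irreducible $\mathbb{C}H$-modules of dimensions $52$ and $196$, so that $\Lambda^2(M_2)$ and $M_1\otimes M_2$ are reductions of lattices affording the ordinary characters $\Lambda^2(\chi_{196})$ and $\chi_{52}\chi_{196}$. Decomposing these two characters into irreducible characters of ${}^3\!D_4(2)$ from \cite{atlas}, and then reducing each constituent modulo $3$ via the decomposition matrix in \cite{abc}, one computes the multiplicities of the Brauer characters of $M_1$ and of $M_2$ among the composition factors of $\Lambda^2(M_2)$ and of $M_1\otimes M_2$. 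Since $\dim_k\Hom_{kH}(A,S)$ is at most the multiplicity of the simple module $S$ as a composition factor of $A$, this yields four upper bounds at once; I expect the computation to show that $M_1$ is not a composition factor of $M_1\otimes M_2$ at all, which gives $\Hom_{kH}(M_1\otimes M_2,M_1)=0$ outright, and that each of the other three relevant multiplicities equals $1$.

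For the three non-zero values it then suffices to produce one non-trivial map in each case, and for this I would use the Lie bracket. Since $H$ embeds in $E_8$ (via the Thompson group, as above), the Lie bracket on $L(E_8)=M_1\oplus M_2$ is $H$-equivariant, and under $\Lambda^2(M_1\oplus M_2)\cong\Lambda^2(M_1)\oplus\Lambda^2(M_2)\oplus(M_1\otimes M_2)$ its six components are precisely the six maps in the statement. By the previous lemma $M_1$ is a (perfect) subalgebra of $\mathfrak{e}_8$, and the vanishing $\Hom_{kH}(M_1\otimes M_2,M_1)=0$ just established forces $[M_1,M_2]\subseteq M_2$. Now $\mathfrak{e}_8$ is a simple Lie algebra in characteristic $3$: if $[M_1,M_2]$ were zero then $M_1$ would be a proper non-zero ideal, and if $[M_2,M_2]\subseteq M_2$ then $M_2$ would be a subalgebra normalised by $M_1$, hence again a proper non-zero ideal — both impossible — so $\Hom_{kH}(M_1\otimes M_2,M_2)\neq0$ and $\Hom_{kH}(\Lambda^2(M_2),M_1)\neq0$. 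Finally, if $[M_2,M_2]\subseteq M_1$ then $\mathfrak{e}_8=M_1\oplus M_2$ would be a $\mathbb{Z}/2\mathbb{Z}$-grading, making $M_1$ the fixed-point subalgebra of an involution of $\mathfrak{e}_8$; but in characteristic $\neq 2$ such a subalgebra has dimension $120$ or $136$, never $52$, so $\Hom_{kH}(\Lambda^2(M_2),M_2)\neq0$. Combined with the upper bounds, this gives the six stated values.

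The step I expect to be the main obstacle is the character calculation: decomposing the rather large character $\Lambda^2(\chi_{196})$, of degree $19110$, into irreducible characters of ${}^3\!D_4(2)$, and being confident of the relevant entries of the $3$-modular decomposition matrix. Should it transpire that some composition multiplicity exceeds $1$, or that $M_1$ does occur in $M_1\otimes M_2$, one would have to supplement the count of composition factors with an analysis of the socle and head of the module concerned rather than relying on the crude bound above.
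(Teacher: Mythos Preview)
Your overall strategy---character-theoretic upper bounds plus Lie-theoretic lower bounds---is sound and more conceptual than the paper's, which simply says ``one may use a computer to check these with ease'' and then remarks that some pieces (the two zeros, and $\Hom_{kH}(\Lambda^2(M_1),M_1)=k$ via \cite{craven2020un}) can be done by hand. Your $\mathbb{Z}/2\mathbb{Z}$-grading argument for $\Hom_{kH}(\Lambda^2(M_2),M_2)\neq 0$ is a genuinely new idea that the paper does not attempt; the paper explicitly concedes that this space ``cannot easily be seen'' without a machine.

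However, your expected upper bounds do not all come out as you hope. The paper itself records (in the proof of the next proposition) that the $kH$-composition factors of $M_1\otimes M_2$ have dimensions $25,196,196,441,1963,2457,2457,2457$: thus $M_2$ occurs with multiplicity~$2$, not~$1$, and your crude bound only gives $\dim\Hom_{kH}(M_1\otimes M_2,M_2)\le 2$. By the Brauer-character identity you implicitly use, $M_1$ then also occurs twice in $M_2\otimes M_2$; since (as the paper notes) $S^2(M_2)$ has no $M_1$ factor, both copies sit in $\Lambda^2(M_2)$, so the same obstruction hits $\Hom_{kH}(\Lambda^2(M_2),M_1)$. So exactly the contingency you flag in your last paragraph does arise: to reach $=k$ rather than $\le 2$ you must analyse the socle (or head) of $M_1\otimes M_2$ and show that only one of the two $196$'s lies there. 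The paper does not do this by hand either; it is precisely here that the computer is invoked.
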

\begin{proof} One may use a computer to check these with ease. Some may be checked easily by hand as well, using the ordinary character table and the $3$-decomposition matrix for $H$. For example, using those two tables, $M_1\otimes M_2$ does not possess a composition factor $M_1$, and thus \[ \Hom_{kH}(M_1\otimes M_2,M_1)=\Hom_{kH}(\Lambda^2(M_1),M_2)=0.\]
The statement that $\Hom_{kH}(\Lambda^2(M_1),M_1)=k$ appears in \cite[Section 4.3.4]{craven2020un} (where it is proved by computer).
\end{proof}

At least the existence, if not the uniqueness, of two of the three remaining non-zero maps is clear from the fact that $H$ embeds in $E_8(3)$ with representation $M_1\oplus M_2$. If $\Hom_{kH}(M_1\otimes M_2,M_2)=0$ then $M_1$ would be an ideal of the Lie algebra (as $\Hom_{kH}(\Lambda^2(M_1),M_2)=0$), which is not possible. A character calculation shows that $S^2(M_2)$ does not have a composition factor $M_1$, so
\[ \Hom_{kH}(\Lambda^2(M_2),M_1)=\Hom_{kH}(M_1\otimes M_2,M_2).\]
%Finally, to avoid $M_2$ being an ideal we need $\Hom_{kH}(\Lambda^2(M_2),M_1)$ to be non-zero.

It is only $\Hom_{kH}(\Lambda^2(M_2),M_2)$ that cannot easily be seen. Indeed, this space will cause us a problem later on.

\bigskip

We now prove that five of the six Hom-spaces extend to the algebraic group $\mb X=F_4$, with only $\Hom_{kH}(\Lambda^2(M_2),M_2)$ missing. If one is happy to use a computer for all of this, one simply checks that all $H$-invariant maps are $F_4(3)$- and even $F_4(9)$-invariant, and thus one does not need to prove the next proposition.

\begin{prop} Let $\mb X$ be an algebraic $k$-group of type $F_4$. We have
\[ \Hom_{\mb X}(\Lambda^2(L(1000)),L(1000))=k,\]
\[ \Hom_{\mb X}(\Lambda^2(L(1000)),L(0010))=\Hom_{\mb X}(L(1000)\otimes L(0010),L(1000))=0, \text{ and }\]
\[ \Hom_{\mb X}(\Lambda^2(L(0010)),L(1000))=\Hom_{\mb X}(L(1000)\otimes L(0010),L(0010))=k.\]
\end{prop}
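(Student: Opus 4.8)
The plan is to compute each of the five Hom-spaces directly from the representation theory of the algebraic group $\mb X$ of type $F_4$ in characteristic $3$, using the structure of the modules $L(1000)$ (the $52$-dimensional adjoint module, which we abbreviate $V_1$) and $L(0010)$ (the $196$-dimensional module, abbreviated $V_2$). The basic tool throughout is Frobenius reciprocity together with the tilting/Weyl module filtration technology: for restricted modules, $\Hom_{\mb X}(\Lambda^2 V_i \otimes V_j^*, k)$ can be read off once one knows the good filtration multiplicities of the relevant tensor products, and these are accessible from L\"ubeck's tables \cite{luebeck2001} for small-rank groups in small characteristic, or via the linkage principle and translation functors. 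Since $V_1$ is self-dual (it is the adjoint module) and $V_2$ is self-dual as well (it is the unique $196$-dimensional irreducible, and its highest weight $0010$ is fixed by $-w_0$ in type $F_4$), all the dualities needed to pass between $\Hom(\Lambda^2 A, B)$ and $\Hom(A\otimes B, A)$-type statements are available.

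First I would dispose of $\Hom_{\mb X}(\Lambda^2 V_1, V_1) = k$: this is just the statement that $\mathfrak{f}_4$ is a simple Lie algebra with a one-dimensional space of invariant alternating products, which is the $[\cdot,\cdot]$ bracket; since $p = 3$ does not divide the relevant structure constants badly enough to create extra invariants here (and in any case this is recorded as known), one cites the fact that the adjoint representation of $F_4$ in characteristic $3$ is irreducible and that $\dim\Hom_{\mb X}(\Lambda^2 V_1, V_1) = 1$, e.g. from the same source as in the $H$-computation. Next, for $\Hom_{\mb X}(\Lambda^2 V_1, V_2)$ and $\Hom_{\mb X}(V_1\otimes V_2, V_1)$, I would show that $V_2$ is not a composition factor of $\Lambda^2 V_1$ and, dually, that $V_1$ is not a composition factor of $V_1 \otimes V_2$. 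By self-duality these two vanishing statements are equivalent (apply $\Hom(-,k)$ and use $(V_1\otimes V_2\otimes V_1)^* \cong V_1\otimes V_2\otimes V_1$), so it suffices to establish one of them; I would do this by decomposing $V_1\otimes V_1$ (equivalently $\Lambda^2 V_1 \oplus S^2 V_1$) into Weyl modules using \cite{luebeck2001}, checking that no Weyl module $\Delta(\mu)$ with $L(0010)$ among its composition factors occurs in $\Lambda^2 V_1$.

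For the remaining two equalities, the paper has already observed (in the discussion after the $kH$-Proposition) the module-theoretic identity that $S^2 V_2$ has no composition factor $V_1$, giving $\Hom_{\mb X}(\Lambda^2 V_2, V_1) \cong \Hom_{\mb X}(V_1 \otimes V_2, V_2)$ again by self-duality; so it is enough to compute $\dim \Hom_{\mb X}(V_1\otimes V_2, V_2) = \dim\Hom_{\mb X}(V_1, V_2\otimes V_2)$. The existence of a nonzero such map is forced by the existence of the $E_8$ embedding: $V_1 = \mathfrak{f}_4$ acts on $V_2$, giving a nonzero equivariant map $V_1\otimes V_2 \to V_2$ (this is exactly the argument already given in the $kH$ setting, now at the level of the algebraic group, since $V_2$ is an $\mb X$-module and $\mathfrak{f}_4 = \operatorname{Lie}(\mb X)$ acts on it). So the content is the upper bound $\dim\Hom_{\mb X}(V_1\otimes V_2, V_2) \le 1$; I would get this by computing the $\Delta$-filtration multiplicity of $\Delta(1000)$ in $V_2\otimes V_2^* = V_2\otimes V_2$, which again reduces to L\"ubeck's tables and a linkage-class bookkeeping computation in the $F_4$ root system at $p = 3$.

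The main obstacle I anticipate is precisely this last multiplicity computation for $\Hom_{\mb X}(\Lambda^2 V_2, V_2)$: the module $V_2$ is $196$-dimensional and $V_2\otimes V_2$ is $196^2 = 38416$-dimensional, so decomposing it into Weyl (or tilting) modules and then controlling which composition factors $L(1000)$ appear in the socle/head of the relevant piece is delicate — this is, after all, exactly the Hom-space the authors flag as the one that "cannot easily be seen" and that "will cause us a problem later on." The fallback, consistent with the paper's stated strategy, is to verify the bound $\dim\Hom_{\mb X}(\Lambda^2 V_2, V_2) \le 1$ by a direct machine computation inside $F_4(9)$ (checking that the space of $F_4(9)$-invariant candidate brackets on the $196$-piece is one-dimensional), and to combine this with the five clean cases above; the conceptual proof of this particular bound, if one insists on avoiding the computer, would require a finer analysis of the submodule structure of $V_2\otimes V_2$ than the other five cases demand.
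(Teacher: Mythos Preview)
Your proposal takes a substantially harder route than the paper and, in one place, drifts off target.

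The paper's proof rests on a single observation you never invoke: since $H={}^3\!D_4(2)$ embeds in $\mb X=F_4$, every $\mb X$-equivariant map is $kH$-equivariant, so each of the five $\Hom_{\mb X}$-spaces has dimension at most the corresponding $\Hom_{kH}$-space computed in the preceding proposition. This instantly gives the two vanishings and the three upper bounds of~$1$. All that remains is to exhibit nonzero elements. For $\Hom_{\mb X}(\Lambda^2 L(1000),L(1000))$ the Lie bracket does it (as you say). For $\Hom_{\mb X}(L(1000)\otimes L(0010),L(0010))$ the paper does \emph{not} appeal to the action of $\mathfrak f_4$ on $V_2$; instead it determines the $\mb X$-composition factors of $L(1000)\otimes L(0010)$ by comparing dimensions against the known $kH$-composition factors and L\"ubeck's list, finding factors of dimensions $25,196,196,2404,7371$. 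Since $L(0010)$ is the unique repeated factor and the tensor product is self-dual, $L(0010)$ must occur in the socle, giving the nonzero map.

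Your direct approach via Weyl/tilting filtrations has a real problem: $L(0010)$ is \emph{not} tilting in characteristic~$3$ (the Weyl module $\Delta(0010)$ has dimension $273$, not $196$), so $V_2\otimes V_2$ need not admit a $\Delta$-filtration, and ``the $\Delta$-filtration multiplicity of $\Delta(1000)$ in $V_2\otimes V_2$'' is not a well-defined quantity you can read off. Moreover, your final paragraph is about the wrong space: you write that the obstacle is bounding $\Hom_{\mb X}(\Lambda^2 V_2,V_2)$, but that is the \emph{sixth} Hom-space, explicitly excluded from this proposition and deferred to the computer check that follows. The space you actually need to bound here is $\Hom_{\mb X}(V_1\otimes V_2,V_2)$, and that bound is immediate from $\Hom_{kH}(M_1\otimes M_2,M_2)=k$.
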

\begin{proof} Note that each of these spaces must have dimension at most the dimension of the corresponding space for $H$. This yields the two $0$-dimensional spaces, and that the other spaces have dimension at most $1$. The first statement holds because $\mb X$ is an algebraic group and its adjoint module is $L(1000)$, thus the space is non-zero.

For the last statement, since $S^2(M_2)$ has no composition factor isomorphic to $M_1$, certainly $S^2(L(0010))$ has no composition factor isomorphic to $L(1000)$. Thus the two Hom-spaces are isomorphic, so it remains to find a non-zero map in the latter space.

Using the Brauer character table of $H$ \cite[p.251]{abc} (or preferably, a computer), the composition factors of the $kH$-module $M_1\otimes M_2$ are of dimensions
\[ 25,196,196,441,1963,2457,2457,2457.\]
The highest-weight module $L(1010)$, which must appear as a composition factor in $L(1000)\otimes L(0010)$, has dimension $7371$ (see \cite[Appendix A.50]{luebeck2001}), and must restrict to $kH$ to be the sum of the three (non-isomorphic) modules of dimension $2457$, as no other combination of dimensions works. The rest of the composition factors, in total, have dimension $2821$, so there must be an $\mathbf{X}$-composition factor of dimension between $1963$ and $2821$. Consulting \cite[Appendix A.50]{luebeck2001}, we find exactly one such module: $L(0011)$ of dimension $2404=1963+441$. The remaining $kH$-modules, $25$, $196$ and $196$, must be the other composition factors for $\mathbf{X}$, because $\mathbf{X}$ has no simple modules of dimension $25+196$, $196+196$, or $25+196+196$.

Thus the composition factors of $L(1000)\otimes L(0010)$ have dimensions $25$, $196$, $196$, $2404$ and $7371$. Since $L(0010)$ is the unique module to appear more than once, and the tensor product is self-dual, $L(0010)$ must be a submodule, and the maps in $\Hom_{kH}(M_1\otimes M_2,M_2)$ extend to $\mb X$.
\end{proof}

The last remaining Hom-space to check is $\Hom_{\mb X}(\Lambda^2(L(0010)),L(0010))$. This seems difficult to do by hand, and we resort to a computer. There are two ways to proceed. The first is to prove that there is an $F_4(3)$-invariant map in the space (this takes a couple of minutes), and thus the group $H$ is contained in a copy of $F_4(3)$ in $\mb G$. We then apply \cite[Proposition 6.8]{craven2019un}, which states that $F_4(3)$ is contained in a positive-dimensional subgroup stabilizing the same subspaces of $L(\mb G)$, which are $M_1$ and $ M_2$. This must be a copy of $F_4$ (as it stabilizes an $\mathfrak f_4$-Lie subalgebra), and we are done. Alternatively, we prove the same statement for $F_4(9)$ (which contains elements of order $9^4+1=6562$, this takes about half an hour on one of the first author's computers) and then apply \cite[Proposition 2]{liebeckseitz1998}, which yields the same positive-dimensional subgroup.

We must also show that the subgroup $\mb X$ is actually $F_4$. This is easy, and we can do it quite generally.

\begin{lem}\label{l:itisF4} Let $\mb G$ be of type $E_8$ over $k$, and let $\mb X$ be a closed, positive-dimensional subgroup of $\mb G$. If $\mb X$ acts on $L(\mb G)$ with composition factors of dimensions $52$ and $196$ then $\mb X$ is simple of type $F_4$. Furthermore, $\mb X$ is maximal in $\mb G$.
\end{lem}
\begin{proof} Since $\mb X$ has no trivial composition factor on $L(\mb G)$ it cannot lie in a parabolic subgroup, hence must be reductive. It also cannot centralize any semisimple element, hence must be semisimple. Since $L(\mb X)$ is a submodule of $L(\mb G)$, it has dimension either $52$ or $196$ and is simple. There is no simple algebraic group of rank at most $8$ and dimension $196$, so $\mb X$ has dimension $52$, and must therefore be $F_4$. Since $F_4$ has no outer automorphisms, $N_{\mb G}(\mb X)=\mb X$ (as $\mb X$ has trivial centralizer). Since any closed, positive-dimensional proper subgroup of $\mb G$ cannot act irreducibly on $L(\mb G)$, any overgroup of $\mb X$ also acts with composition factors $52$ and $196$, hence is $F_4$ by the above proof. Thus $\mb X$ is maximal, as claimed.
\end{proof}

Thus we obtain the following.

\begin{prop}\label{uniquef4} Let $p=3$ and let $H$ be a subgroup ${}^3\!D_4(2)$ of $\mb G$, acting on $L(\mb G)$ with composition factors of dimensions $52$ and $196$. Then $H$ is contained in a positive-dimensional subgroup of type $F_4$, stabilizing exactly the same subspaces of $L(\mb G)$ that are stabilized by $H$.
\end{prop}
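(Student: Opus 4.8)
The plan is to realise the required subgroup as the unique copy of $F_4$ in $\mathrm{GL}(M_1)$ containing $H$, by showing that it preserves the Lie bracket of $\mathfrak{e}_8$. By the Lemma, $M_1$ with the restricted bracket is a Lie algebra of type $\mathfrak{f}_4$; set $\mb X = \mathrm{Aut}^\circ(M_1) \cong F_4$. Then $H \leq \mb X$, since $H$ preserves the bracket on $M_1$, and $\mb X$ acts on $M = M_1 \oplus M_2$ extending the action of $H$, with $M_2$ the restricted simple module $L_{\mb X}(0010)$ by Curtis's theorem. It then suffices to show that the Lie bracket $b\colon \Lambda^2 M \to M$ of $\mathfrak{e}_8$ is $\mb X$-invariant, or at least invariant under a large finite subgroup of $\mb X$: in the first case $\mb X \leq E_8$ directly, in the second we appeal to a rigidity result. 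Either way the resulting group stabilises $M_1$ --- an $\mathfrak{f}_4$-subalgebra, whence the type is $F_4$ --- and $M_2$, which are precisely the subspaces stabilised by $H$, as $M_1 \not\cong M_2$ are simple $kH$-modules.

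To analyse $b$, decompose it into its six components along $\Lambda^2(M_1 \oplus M_2) = \Lambda^2 M_1 \oplus (M_1 \otimes M_2) \oplus \Lambda^2 M_2$ and $M = M_1 \oplus M_2$. By the two preceding Propositions, two of these components vanish and so are $\mb X$-invariant; the component $\Lambda^2 M_1 \to M_1$ is the $\mathfrak{f}_4$-bracket, hence $\mb X$-invariant by construction; and for $M_1 \otimes M_2 \to M_2$ and $\Lambda^2 M_2 \to M_1$ the corresponding $\Hom$-spaces for $\mb X$ are already $1$-dimensional, so via the inclusion $\Hom_{\mb X} \hookrightarrow \Hom_{kH}$ they coincide with the $1$-dimensional $H$-spaces in which these components lie. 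Thus five of the six components are $\mb X$-invariant, and $b$ is $\mb X$-invariant as soon as its last component, lying in $\Hom_{kH}(\Lambda^2 M_2, M_2) = k$, is.

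There seems to be no way to compute the corresponding $\mb X$-space $\Hom_{\mb X}(\Lambda^2 L(0010), L(0010))$ by hand, so for this last component I would resort to a machine computation: take explicit matrices for a generating set of $F_4(3)$, or for extra safety $F_4(9)$, acting on $M$, and check that they fix this component of $b$. Then $F_4(3)$ (respectively $F_4(9)$) preserves all of $b$, hence lies in $E_8$ and stabilises $M_1$ and $M_2$, and no further subspace since it is sandwiched between $H$ and itself. Finally, \cite[Proposition~6.8]{craven2019un} places $F_4(3)$ inside a positive-dimensional subgroup of $E_8$ stabilising the same subspaces of $L(E_8)$; alternatively, since $F_4(9)$ contains elements of order $9^4+1 = 6562$, \cite[Proposition~2]{liebeckseitz1998} does the same for $F_4(9)$. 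The resulting subgroup contains $H$, stabilises exactly $M_1$ and $M_2$, and --- stabilising the Lie subalgebra $\mathfrak{f}_4 = M_1$ --- has type $F_4$, as required. I expect the one genuine obstacle to be precisely this $\Hom$-space $\Hom_{\mb X}(\Lambda^2 L(0010), L(0010))$: everything else is forced by representation theory or the quoted results, but this one seems to demand a computer.
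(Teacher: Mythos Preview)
Your proposal is correct and follows essentially the same route as the paper: decompose the $H$-invariant bracket into six components, use the two preceding propositions to show five of them are $\mb X$-invariant, handle $\Hom_{kH}(\Lambda^2 M_2,M_2)$ by a machine check over $F_4(3)$ or $F_4(9)$, and then invoke \cite[Proposition~6.8]{craven2019un} or \cite[Proposition~2]{liebeckseitz1998} to produce the positive-dimensional overgroup, which is forced to be $F_4$ because it stabilises the $\mathfrak f_4$-subalgebra $M_1$. Your identification of the sixth Hom-space as the one genuine obstruction, and your expectation that it requires a computer, both match the paper exactly.
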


It suffices to ascertain the uniqueness up to conjugacy of the subgroup of type $F_4$, and as a by-product we also obtain uniqueness of $H$ up to conjugacy.

Let $t$ be an involution in $\mb X=F_4$ with centralizer $\mathbf{B}$ of type $B_4$. The trace of $t$ on $L(\mb G)$ is $-8$ (see \cite[Table 4]{liebeckseitz1999}), and so the centralizer of $t$ in $\mb G$ is $D_8$, which acts with composition factors $L(\lambda_2)$ and $L(\lambda_7)$, of dimensions $120$ and $128$ respectively. The restriction of the $k\mb X$-module $L_\mathbf{X}(1000)$ to $\mathbf{B}$ is $L_\mathbf{B}(0001)\oplus L_\mathbf{B}(0100)$% (to avoid confusion we use $L_\mathbf{X}(-)$ and $L_\mathbf{B}(-)$ to denote the highest-weight modules for $\mathbf{X}$ and $\mathbf{B}$ respectively)
, with dimensions $16$ and $36$ respectively. The restriction of $L_\mathbf{X}(0010)$ to $\mathbf{B}$ is $L_\mathbf{B}(0010)\oplus L_\mathbf{B}(1001)$, of dimensions $84$ and $112$ respectively. (This can be checked using weights or quickly on a computer for $F_4(3)$.) From \cite[Table~60]{thomas2017un} we see that $\mathbf{B}$ is subgroup $E_8(\#45)$ and is unique up to conjugacy. But clearly there is a unique way to assemble the numbers $16$, $36$, $84$ and $112$ to make $52$ and $196$. Thus given any subgroup of type $B_4$ there exists at most one $F_4$ containing it, which must stabilize the submodule $L_\mathbf{B}(0001)\oplus L_\mathbf{B}(0100)$. Thus we obtain the result that $\mb X$ is unique up to conjugacy.

This completes the proof of uniqueness of $\mb X$, and thus Theorem \ref{thm:newf4} is proved.

\section{An explicit construction of the maximal subgroup \texorpdfstring{$F_4$}{F4}} \label{s:sec3}
Recall that $k$ is algebraically closed and of characteristic $3$. For their application to future explicit computations, we give expressions for the root elements $x_{\pm \beta_i}(t)$ for $t \in k$ and $\beta_i$ a root of the maximal subgroup $F_4$ as products of root elements of $\mb G$; see \cite[\S4.4]{Car89} for notation. We also provide the elements $h_{\gamma_i}(t)$ for $\gamma_i$ a simple root of $F_4$ and $t \in k^*$, written in terms of the $h_{\alpha_i}(t)$ elements of $\mb G$. As the factors of $x_{\beta_i}(t) = \prod_j x_{\alpha_j}(c_j t)$ commute we get easily that $x_{-\beta_i}(t) = \prod_j x_{-\alpha_j}(c_j t)$.  For this reason we only list the root group elements for positive roots. Note that since the coefficients $c_j$ that we exhibit are elements of $\text{GF}(3)$, it follows that the subgroup $\mathbf Y$ we produce is defined over $\text{GF}(3)$. 

\begin{prop} \label{p:almostpres}
The elements $x_{\pm \beta_i}(t) = \prod_j x_{{\alpha_j}}(c_j t)$ for $t \in k$ generate a maximal subgroup $\mb X$ of type $F_4$ in $\mb G$. They are the root elements of $F_4$ with respect to the maximal torus generated by $h_{\gamma_i}(t) \prod_j x_{{\alpha_j}}(d_j t)$ for $t \in k^*$. Furthermore, the elements $e_{\beta_i}, h_{\gamma_i}$ form a Chevalley basis for $\mathfrak h = \Lie(F_4)$ where $e_{\beta_i} = \sum c_je_{\alpha_j}$,  $h_{\gamma_i} = \sum d_je_{\alpha_j}$. 
\end{prop}

$x_{1000}(t) = x_{00010000}(t) x_{00000100}(t)$

$x_{0100}(t) = x_{00100000}(-t) x_{00000010}(t)$

$x_{0010}(t) = x_{10000000}(-t) x_{00000001}(t) x_{00011000}(-t) x_{00001100}(-t)$

$x_{0001}(t) = x_{11110000}(t) x_{01121000}(-t) x_{01111100}(-t) x_{01011110}(t)$

$x_{1100}(t) = x_{00110000}(-t) x_{00000110}(-t)$ 

$x_{0110}(t) = x_{10100000}(t) x_{00000011}(-t) x_{00111000}(-t) x_{00001110}(-t) $ 

$x_{0011}(t) = x_{11121000}(t)x_{11111100}(t)x_{01011111}(t)x_{01122100}(-t)$ 

$x_{1110}(t) = x_{10110000}(t)x_{00000111}(t)x_{00111100}(-t)x_{00011110}(t)$ 

$x_{0120}(t) = x_{10111000}(-t)x_{00001111}(-t) $

$x_{0111}(t) = x_{11221000}(t)x_{11111110}(t)x_{01111111}(t)x_{01122110}(-t)$

$x_{1120}(t) = x_{10111100}(-t)x_{00011111}(t)$ 

$x_{1111}(t) = x_{11221100}(t)x_{11121110}(-t)x_{01121111}(-t)x_{01122210}(t)$ 

$x_{0121}(t) = x_{11111111}(-t) x_{11222100}(-t) x_{11122110}(t) x_{01122111}(t) $

$x_{1220}(t) = x_{10111110}(t) x_{00111111}(-t)$ 

$x_{1121}(t) = x_{11121111}(t) x_{11232100}(t) x_{11122210}(-t) x_{01122211}(-t)$ 

$x_{0122}(t) = x_{12232111}(-t) x_{12233210}(t)$ 

$x_{1221}(t) = x_{11221111}(-t) x_{11232110}(-t) x_{11222210}(t) x_{01122221}(-t)$

$x_{1122}(t) = x_{12232211}(t) x_{12243210}(-t)$ 

$x_{1231}(t) = x_{11232111}(t) x_{11222211}(-t) x_{11122221}(-t) x_{11233210}(t)$ 

$x_{1222}(t) = x_{12232221}(t) x_{12343210}(t)$ 

$x_{1232}(t) = x_{22343210}(t) x_{12343211}(t) x_{12243221}(t) x_{12233321}(-t)$

$x_{1242}(t) = x_{22343211}(t) x_{12244321}(t)$ 

$x_{1342}(t) = x_{22343221}(-t) x_{12344321}(t)$ 

$x_{2342}(t) = x_{22343321}(t) x_{12354321}(-t)$

$h_{1000}(t) = h_{00010000}(t) h_{00000100}(t)$

$h_{0100}(t) = h_{00100000}(t) h_{00000010}(t)$

$h_{0010}(t) = h_{10000000}(t) h_{00010000}(t) h_{00001000}(t^2) h_{00000100}(t) h_{00000001}(t) $

$h_{0001}(t) = h_{1000000}(t) h_{01000000}(t^4) h_{00100000}(t^3) h_{00010000}(t^5) h_{00001000}(t^3) h_{00000100}(t^2) h_{00000010}(t)$

\begin{proof}
It can be checked by hand that these elements satisfy Steinberg's relations (see \cite[Theorem~12.1.1]{Car89}) for a suitable choice of constants for Chevalley's commutator relations and that the corresponding elements of $\mathfrak{e}_8$ form a Chevalley basis for $\text{Lie}(\mb X)$, a Lie algebra of type $F_4$. Another check, which theoretically could be done by hand, shows that $\mb X$ acts on $\text{Lie}(\mb G)$ as $L_{\mb X}(1000)\oplus L_{\mb X}(0010)$, and so by Lemma \ref{l:itisF4} we see that $\mb X$ is a maximal subgroup of $\mb G$. 
\end{proof}

\begin{rem}
The proof of Proposition \ref{p:almostpres} provides another proof for the existence of a maximal subgroup of type $F_4$ in $\mb G$, which is at least in theory computer-free.  
\end{rem}

\begin{rem} 
We note that the $24$-dimensional unipotent subgroup generated by the positive root elements of $F_4$ is contained in the $120$-dimensional unipotent subgroup generated by the positive roots of $\mb G$. The presentation alluded to in Proposition \ref{p:almostpres} has the slightly unfortunate property that the constants in Chevalley's commutator relations for this subgroup $F_4$ are not the same as those used in Magma, which are a somewhat standard choice. However, one can if one wishes rectify this by choosing a different basis: let $\tilde{x}_{\alpha_1}(t) = x_{-0100}(t)$, $\tilde{x}_{\alpha_2}(t) = x_{-1242}(t)$, $\tilde{x}_{\alpha_3}(t) = x_{1232}(t)$, $\tilde{x}_{\alpha_4}(t) = x_{-0001}(t)$. Then we generate the same maximal subgroup $F_4$ but this time the constants in the commutator relations do agree with those in Magma.\end{rem}

We provide a brief explanation on how we found the generators and Chevalley basis in Proposition \ref{p:almostpres}. We start with the $248$-dimensional module $M$ with summands of
dimensions $52$ and $196$ for $H\cong {}^3D_4(2)$ (defined at the start of Section \ref{s:sec2}). We use the space
$\Hom_{kH}(\Lambda^2(M),M)$, as used in \cite{craven2021un} (described
there as the `Lie product method'), to construct an explicit
$H$-invariant Lie product on $M$ that turns $M$ into a copy of
$\mathfrak{e}_8$. This gives us explicit structure constants. The module $M$ splits as the sum of $52$- and $196$-dimensional $H$-stable submodules. As explained in Lemma \ref{l:f4subalgebra}, the first of these subspaces is forced to be a subalgebra $\mathfrak{h}$ of $\mathfrak{e}_8$ isomorphic to $\mathfrak{f}_4$ and so Magma could write down a basis for it in terms of a basis of $\mathfrak{e}_8$. However, this process left us with basis elements for $\mathfrak h$ with around $120$ non-zero coefficients in terms of a basis of $\mathfrak{e}_8$. 

We found that four of the basis vectors for $\mathfrak{h}$ were toral (meaning $x^{[p]} = x$ \cite{SF88}, which implies $x$ is semisimple) and commuted with each other, thus spanning a maximal toral subalgebra $\mathfrak{t}$. We then searched for a $\mb G$-conjugate of $\mathfrak{h}$ such that the corresponding conjugate of $\mathfrak{t}$ was contained in the standard toral subalgebra of $\mathfrak{e}_8$. To do this we used the inbuilt \texttt{InnerAutomorphism} function in Magma to construct the automorphisms of $\mathfrak{e}_8$ corresponding to $x_{\gamma}(\pm1)$ for all roots $\gamma$ in the root system of $\mb G$, yielding 480 possible conjugations.

Our strategy was to implement a naive hill climb for the first basis element $t_1$ of $\mathfrak{t}$. Indeed, we searched through all 480 possible conjugating elements and selected the one that yielded the largest number of zero coefficients when expressing $t_1^g$ in terms of the basis of $\mathfrak{e}_8$. We remembered the elements we used at each step. This meant that when we could no longer increase the number of zero coefficients we could trace our steps back and take the next best conjugating element and continue the process. This lead to a significant increase in the number of zero coefficients but nowhere near the $240$ we needed.  

We then slightly upgraded our hill climb algorithm to include using a random conjugating element at fixed intervals. Every $100$ steps we chose a random conjugating element and used this, regardless of what it did to the number of zero coefficients. This method was not optimized; it could be that a better choice would have been every five steps, or 500 steps. But this hill climb was enough for us; it quickly led us to a conjugating element $g_0$ which took $t_1$ into the standard toral subalgebra of $\mathfrak{e}_8$. At this point, the remaining three basis elements $t_2, t_3$ and $t_4$ were not sent by $g_0$ to something in the standard toral subalgebra of $\mathfrak{e}_8$, but they had significantly fewer non-zero coefficients. We then repeated the algorithm looking to increase the total number of zero coefficients in $t_1^g, \ldots, t_4^g$ and this quickly converged, yielding a conjugating element $g_1$ which sent $t_1, \ldots, t_4$ to the four toral elements corresponding to the generators of the maximal torus given in Proposition \ref{p:almostpres}. From the toral subalgebra of $\mathfrak{h}^{g_1}$ it was then routine to take a Cartan decomposition and find the corresponding root elements. 

It turned out that the root elements of $\mathfrak h$ were expressed as a sum of commuting root elements of $\mathfrak{e}_8$; in fact long root elements of $\mathfrak h$ were of type $2A_1$ in $\mathfrak{e}_8$, whereas short root elements were of type $4A_1$. For pairwise commuting root elements $e_{\alpha_1},\dots,e_{\alpha_k}$ of $\mathfrak{e}_8$, the operators $\ad e_{\alpha_i}$ pairwise commute and so one has
\[\exp(t_1\ad e_{\alpha_1}+\dots + t_m\ad e_{\alpha_m})=\exp(t_1e_{\alpha_1})\ldots\exp(t_m e_{\alpha_m}).\]
Thus if $e_{\beta_i}=\sum t_je_{\alpha_j}\in\mathfrak{h}$ is of this form, then evidently the left-hand side of the displayed equation normalizes $\mathfrak{h}$ in the group $\mathrm{GL}_{248}(k)$, and the right-hand side belongs to $E_8(k)$, hence for $t\in k$, the elements $x_{\beta_i}(t)$ generate a connected smooth subgroup $\mathbf Y$ of $E_8$ for which $\mathfrak{h}\subseteq\Lie(\mathbf Y)$. As the restriction to $\mathfrak h$ of the adjoint module $\mathfrak e_8$ is the direct sum of $\mathfrak{h}$  and a simple module, it is maximal, which forces $\Lie(\mathbf Y)=\mathfrak{h}$. The only connected smooth affine $k$-group whose Lie algebra is a simple Lie algebra of type $F_4$ is a group of type $F_4$ itself and using the maximality of $\mathfrak{f}_4$ we conclude that $\mathbf Y$ must be a maximal connected subgroup. This yields yet another proof of the existence a maximal subgroup of type $F_4$ in $\mb G$. 

\section{Consequences}

As before, $k$ is algebraically closed and of characteristic $3$. We extend the results of \cite{lawther2009} to this new maximal subgroup, determining which unipotent classes of $\mb G$ meet the new maximal subgroup $\mb X$ of type $F_4$ non-trivially.

\begin{prop} If $u$ is a unipotent element of $\mb X < \mb G$, then the class of $u$ in $\mb X$ and $\mb G$ is given in Table \ref{tab:classfusionalg}.
\begin{table}
\begin{center}\begin{tabular}{cc}
\hline Class in $F_4$ & Class in $E_8$
\\ \hline $A_1$&$2A_1$
\\ $\tilde A_1$ & $4A_1$
\\ $A_1+\tilde A_1$ & $A_2+2A_1$
\\ $A_2$ & $2A_2$
\\ $\tilde A_2$ & $2A_2$
\\ $A_2+\tilde A_1$ & $2A_2+A_1$
\\ $\tilde A_2+A_1$ & $2A_2+2A_1$
\\\hline $B_2$ & $2A_3$
\\ $C_3(a_1)$ & $A_4+2A_1$
\\ $F_4(a_3)$ & $A_4+A_2$
\\ $B_3$ & $A_6$
\\ $C_3$ & $D_6(a_1)$
\\ $F_4(a_2)$ & $D_5+A_2$
\\ $F_4(a_1)$ & $E_8(b_6)$
\\\hline $F_4$ & $E_8(b_4)$
\\ \hline
\end{tabular}
\end{center}
\caption{Fusion of unipotent classes of the maximal subgroup $\mb X = F_4$ into $\mb G = E_8$. (Horizontal lines separate elements of different orders.)}
\label{tab:classfusionalg}
\end{table}
\end{prop}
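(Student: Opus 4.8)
The plan is to compute the fusion of unipotent classes class by class, exploiting the explicit embedding from the previous section together with known structural data about $F_4$ and $E_8$. The basic strategy has two complementary components. First, for a unipotent element $u$ in a given $F_4$-class, I would use the explicit root-group expressions $x_{\beta_i}(t)=\prod_j x_{\alpha_j}(c_j t)$ to write a representative of $u$ as an explicit product of $E_8$ root elements; from this one can in principle read off its $E_8$-class via the Jordan block sizes on $L(E_8)$ (or better, on the $248$-dimensional module combined with knowledge of which weighted Dynkin diagram matches). Second, and more robustly, I would use the restriction of $L(E_8)$ to $\mb X=F_4$, namely $L(E_8)\!\downarrow\!\mb X = L(1000)\oplus L(0010)$, to compute the Jordan type of each $u$ on $L(E_8)$ purely in terms of its action on the $52$- and $196$-dimensional modules; the Jordan block data on $L(E_8)$ is tabulated for all unipotent (and nilpotent) classes of $E_8$ in \cite{lawther2009}, and matching the resulting partition against that table pins down the $E_8$-class.

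The key steps, in order, would be: (1) List the $15$ unipotent classes of $F_4$ in characteristic $3$ and fix a representative for each as a product of root elements of $\mb X$; for the distinguished classes $F_4$, $F_4(a_1)$, $F_4(a_2)$, $F_4(a_3)$ one uses a regular element in a suitable Levi or an explicit cocharacter. (2) For each class, determine the action of the representative on $L(1000)=\mathfrak{f}_4$ and on $L(0010)$ — the former is just the adjoint action of the unipotent element, whose Jordan type is classical data for $F_4$ (see \cite{lawther2009}), and the latter requires computing Jordan blocks on the $196$-dimensional module, which can be done from weight combinatorics or, as the authors indicate they are willing to do elsewhere, on a computer for $F_4(3)$. (3) Assemble the Jordan type on $L(E_8)$ as the (multiset) union of the two, and also compute the trace of relevant semisimple elements / the dimension of the centralizer $\dim C_{E_8}(u) = 248 - \dim(\mathrm{orbit})$ where $\dim(\mathrm{orbit})$ comes from the rank of $\ad u$ on $L(E_8)$. (4) Match this data against Lawther's tables \cite{lawther2009} for $E_8$: since the Jordan type on the adjoint module, together with $p=3$, very nearly determines the $E_8$-class, this identifies the fusion in almost all cases. (5) For any ambiguity left by the Jordan-type-on-$L(E_8)$ invariant, resolve it by a finer invariant — e.g. restricting to a known overgroup or subgroup whose fusion is already recorded (the $B_4$ subgroup $E_8(\#45)$ from \cite{thomas2017un}, or the $D_8$ centralizing the involution $t$), or by using the explicit root-element expression to locate $u$ inside a Levi subgroup of $E_8$ of known type.

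I expect the main obstacle to be step (2) for the short-root and mixed classes, and step (5) for the small number of genuinely ambiguous cases. The Jordan type of a unipotent element on the $196$-dimensional module $L(0010)$ in characteristic $3$ is delicate: $3$ is not large for $F_4$ (it is a bad prime), so some unipotent classes behave non-generically, Springer-type results do not directly apply, and the module $L(0010)$ is itself irreducible only because $p=3$. One must be careful that the ``class in $F_4$'' is genuinely a single $F_4$-class (not several fused in $E_8$ or split), and in bad characteristic the parametrization of unipotent classes and their centralizer dimensions must be taken from the characteristic-$3$ tables rather than the generic ones. The likely resolution is exactly the hybrid approach the authors use throughout: do the clean cases by the module-restriction/trace argument, and fall back on the explicit root-group presentation plus a short computer verification over $\mathrm{GF}(3)$ (or $\mathrm{GF}(9)$) for the handful of classes — plausibly $B_2$, $C_3(a_1)$, $F_4(a_3)$ and their neighbours — where the adjoint Jordan type alone does not separate the $E_8$-classes.
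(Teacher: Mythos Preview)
Your approach is essentially the paper's: compute the Jordan blocks of each unipotent element on $L(E_8)=L(1000)\oplus L(0010)$ and match against Lawther's tables. The paper's execution is considerably leaner than what you outline---it randomly generates elements of orders $3$, $9$, $27$ in $F_4(3)$, identifies their $F_4$-class via \cite[Tables~3 and~4]{lawther1995}, computes the Jordan blocks on the $248$-dimensional module, and reads off the $E_8$-class; no explicit class representatives, centralizer dimensions, or auxiliary overgroup arguments are needed.

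Two corrections. First, the Jordan block data for $E_8$ lives in \cite{lawther1995}, not \cite{lawther2009} (the latter is the class-fusion paper that this proposition is extending). Second, the ambiguities you anticipate in step~(5) do not in fact arise: the Jordan block structure on $L(E_8)$ alone determines the $E_8$-class in all fifteen cases, \emph{provided} one uses the corrected entry from \cite{lawther1995corr} for the class $E_8(b_6)$, which in characteristic~$3$ has Jordan blocks $9^{26},7,3^2,1$ rather than the originally printed $9^{25},8^2,2^2,1^3$. This correction---which you do not mention---is the one genuinely delicate point in the argument, and it concerns precisely the image of the $F_4(a_1)$ class.
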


\begin{proof} The proof is a fast computer check. Randomly generate elements $u$ of orders $3$, $9$ and $27$ in $F_4(3)$ (given by the presentation alluded to in Lemma \ref{p:almostpres}) until we hit each class. (The class to which $u$ belongs can be deduced from \cite[Tables 3 and 4]{lawther1995}.) The Jordan blocks of the action of $u$ on the sum of $L_{\mb X}(1000)$ and $L_{\mb X}(0010)$ are trivial to compute then. From \cite{lawther1995} we obtain the class in $\mb G$ to which $u$ belongs.

However, note that there is an error in \cite{lawther1995}, due to an error in \cite{mizuno1980}, which leads to a single class having the wrong Jordan blocks in characteristic $3$. This is corrected in \cite{lawther1995corr}, and it concerns exactly the class $E_8(b_6)$ in the table. It has Jordan blocks $9^{26},7,3^2,1$ on $L(E_8)$, not $9^{25},8^2,2^2,1^3$ as stated in \cite{lawther1995}. With this correction, the Jordan block structure of $u$ on $L(\mb G)$ determines the unipotent class to which $u$ belongs, and thus we are done.
\end{proof}

For completeness we do the same thing for nilpotent orbits of $\mathfrak{f}_4$. 

\begin{prop} If $x$ is a nilpotent element of the maximal $\mathfrak f_4$-subalgebra of $\mathfrak e_8$, then the class of $x$ in $\mathfrak f_4$ and $\mathfrak e_8$ is given in Table \ref{tab:classfusionliealg}.
\begin{table}
\begin{center}\begin{tabular}{cc}
\hline Class in $\mathfrak f_4$ & Class in $\mathfrak e_8$ 
\\ \hline $A_1$ & $2A_{1}$
\\ $\tilde A_1$ & $4A_{1}$
\\ $A_1+\tilde A_1$ & $A_{2}+2A_{1}$
\\ $A_2$ & $2A_{2}$
\\ $\tilde A_2$ & $A_{2}+3A_{1}$
\\ $A_2+\tilde A_1$ &  $2A_{2}+A_{1}$
\\ $B_2$ & $2A_{3}$
\\ $\tilde A_2+A_1$ & $2A_{2}+A_{1}$ 
\\ $C_3(a_1)$ & $A_{4}+2A_{1}$
\\ $F_4(a_3)$ & $A_{4}+A_{2}$
\\ $B_3$ & $A_6$
\\ $C_3$ & $D_{5}+A_{1}$
\\ $F_4(a_2)$ & $E_7(a_4)$
\\ $F_4(a_1)$ & $E_6(a_1)$ 
\\ $F_4$ & $E_6$
\\ \hline
\end{tabular}
\end{center}
\caption{Fusion of nilpotent classes of maximal $\mathfrak f_4$ into $\mathfrak e_8$.}
\label{tab:classfusionliealg}
\end{table}
\end{prop}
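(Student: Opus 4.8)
The plan is to follow the pattern of the preceding proposition, with unipotent elements of $F_4(3)$ replaced by nilpotent elements of $\mathfrak{f}_4$ and with the unipotent-class tables replaced by their nilpotent-orbit analogues. One works inside the explicit copy of $\mathfrak{f}_4\subseteq\mathfrak{e}_8$ constructed in Section~3, so that each Chevalley basis element $e_{\beta_i}$ of $\mathfrak{f}_4$ is available as an explicit combination $\sum_j c_j e_{\alpha_j}$ of Chevalley basis elements of $\mathfrak{e}_8$; since the new $\mathfrak{f}_4$ is defined over $\mathrm{GF}(3)$, every computation below can be done over a finite field $\mathrm{GF}(3^n)$.

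First I would obtain a representative $x$ for each of the fifteen nonzero nilpotent $\mathbf{X}$-orbits in $\mathfrak{f}_4$: either by writing down standard representatives as $k$-combinations of the $e_{\beta_i}$, or, in the spirit of the previous proof, by generating random elements of the nilradical of a Borel subalgebra of $\mathfrak{f}_4$ until every orbit has been met. Second I would identify the $\mathfrak{f}_4$-orbit of each $x$ from the Jordan block structure of $\ad(x)$ on the $52$-dimensional adjoint module and on the minimal module, supplemented if necessary by the orbit dimension $52-\dim\mathfrak{z}_{\mathfrak{f}_4}(x)$, comparing with the known nilpotent-orbit data for $\mathfrak{f}_4$ in characteristic $3$. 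Third, using the embedding of Section~3, I would compute the Jordan block structure of $\ad(x)$ on $L(E_8)=L(1000)\oplus L(0010)$ and read off the $\mathfrak{e}_8$-orbit from the nilpotent-class tables for $E_8$. The decomposition $L(E_8)|_{\mathfrak{f}_4}=L(1000)\oplus L(0010)$ furnishes a useful internal check, since the Jordan blocks of $\ad(x)$ on $L(E_8)$ must be the union of those on the $52$- and the $196$-dimensional pieces.

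The main obstacle is that $p=3$ is a bad prime for both $F_4$ and $E_8$, so one cannot shortcut the argument by transporting the unipotent computation along the Springer isogeny: the nilpotent $\mathfrak{f}_4$-orbits need not fuse into $\mathfrak{e}_8$ the same way the unipotent $F_4$-classes do, as is already visible in Table~\ref{tab:classfusionliealg} --- for instance $\tilde A_2$ goes to $A_2+3A_1$ rather than $2A_2$, and the three largest classes land in $E_6$, $E_6(a_1)$, $E_7(a_4)$ rather than in $E_8(b_4)$, $E_8(b_6)$, $D_5+A_2$. Hence each row must be established by direct calculation, and two technical hazards need attention: (i) exactly as the class $E_8(b_6)$ in the unipotent case needed the correction of \cite{lawther1995corr} to the error in \cite{mizuno1980}, the tabulated Jordan blocks on $L(E_8)$ in characteristic $3$ must be used with care, so one should independently verify that the fifteen block patterns arising are pairwise distinct and match the claimed orbits, falling back on a finer invariant (Jordan blocks on a Weyl module, or the $\mathbf{G}$-orbit of an associated cocharacter) should any two coincide; and (ii) a representative defined over $\mathrm{GF}(3)$ may fail to exist for some orbit, so the random search should be run over a suitable extension $\mathrm{GF}(3^n)$, after which one checks that no two geometric orbits are merged on passing to $k$. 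Finally the output is checked against the closure order on nilpotent orbits, which the fusion map must respect, and against \cite{lawther2009} and the existing nilpotent-orbit literature; once all fifteen orbits are accounted for and each fusion is uniquely pinned down, Table~\ref{tab:classfusionliealg} follows.
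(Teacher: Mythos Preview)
Your proposal follows the same overall template as the paper's proof---a computer check inside the explicit $\mathfrak f_4\subset\mathfrak e_8$ of Section~3---but differs in two practical respects. First, rather than a random search you could have used the explicit list of nilpotent orbit representatives for $\mathfrak f_4$ in bad characteristic given in \cite[Appendix]{stewart16}; since these are written in terms of Chevalley generators, they drop straight into the embedding of Section~3 and remove both the need for random generation over extensions of $\mathrm{GF}(3)$ and the separate identification step on the $\mathfrak f_4$ side. Second, and more substantively, the paper does not rely on Jordan blocks on $L(E_8)$ to pin down the $\mathfrak e_8$-orbit: at $p=3$ these are known not to separate all nilpotent orbits, and your proposed fallbacks (blocks on a Weyl module, or an associated cocharacter) are somewhat speculative. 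Instead the paper computes, for each representative $x$, the derived series of $C_{\mathfrak e_8}(x)$ and the normalizer of each term, and then invokes \cite[Proposition~1.5]{kst21un}, which guarantees that these data determine the orbit. This is a sharper and more uniform invariant than the ad hoc checks you outline, and it is what makes the argument go through without case-by-case disambiguation.
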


\begin{proof}
Using the root elements constructed for $\mathfrak f_4$ as a subalgebra of $\mathfrak e_8$ (as explained in Section \ref{s:sec3}) we find a set of nilpotent orbit representatives for $\mathfrak f_4$ using \cite[Appendix]{stewart16}. For each representative $x$ we use Magma to calculate the Jordan block structure for the adjoint action of $x$ on $\mathfrak e_8$ and the normalizer of each term of the derived series of $C_{\mathfrak e_8}(x)$. Using \cite[Proposition 1.5]{kst21un}, we then find the $\mathfrak e_8$-class of $x$. 
\end{proof}

Before stating the next result we explain some notation. We use $\bar{A}_2$ to denote an $A_2$ subgroup generated by long root elements.

In $\mb G$ there are two conjugacy classes of $B_4$ subgroups contained in $D_8$ embedded via their spin module $L_{B_4}(0001)$. When $p$ is odd, these are distinguished by whether or not they are contained in a maximal subgroup of type $A_8$. We denote the class not contained in $A_8$ by $B_4(\dagger)$.

\begin{prop}
If $\mb M$ is a maximal connected reductive subgroup of $\mb X < \mb G$, then $\mb M$ is conjugate to one of the following four subgroups. 
\begin{enumerate}[label=(\roman*)]
\item $\mb M_1 = B_4(\dagger) < D_8$ embedded via the spin module $L_{B_4}(0001)$. It is $\mb G$-irreducible and denoted $E_8(\#45)$ in \cite{thomas2017un}. 

\item $\mb M_2 = A_1 C_3 < D_8$ embedded via $L_{A_1}(2) \oplus L_{C_3}(010)$. This subgroup is $\mb G$-irreducible and denoted by $E_8(\#774)$ in \cite{thomas2017un}.

\item $\mb M_3 = A_1 G_2 < A_1 E_7$ embedded as follows: $E_7$ has a maximal subgroup of type $A_1 G_2$ (when $p\neq 2$). Therefore $A_1 E_7$ has a maximal subgroup $A_1 A_1 G_2$ and $M_3$ is embedded diagonally in this subgroup. One has to twist the embedding in the first $A_1$ factor by the Frobenius morphism. This subgroup is again $\mb G$-irreducible and denoted by $E_8(\#967^{\{1,0\}})$ in \cite{thomas2017un}. 

\item $\mb M_4 = A_2 A_2 < \bar{A}_2 E_6$ embedded as follows: $E_6$ has a maximal subgroup $A_2 G_2$, and $G_2$ has a maximal subgroup $\tilde{A}_2$ generated by short root subgroups of the $G_2$ when $p=3$. Thus $\bar{A}_2 E_6$ has a subgroup $H = \bar{A}_2 A_2 \tilde{A}_2$ (denoted $E_8(\#1012)$ in \cite{thomas2017un}). The first $A_2$ factor of $\mb M_4$ is the second $A_2$ factor of $H$ and the second $A_2$ factor of $\mb M_4$ is diagonally embedded in the first and third factors of $H$ (with no twisting by field or graph automorphisms). Moreover, $\mb M_4$ is not $\mb G$-irreducible. 
\end{enumerate}
\end{prop}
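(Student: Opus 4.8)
The plan is to start from the classification of maximal connected reductive subgroups of $F_4$, which for $p = 3$ are the four types $B_4$, $A_1 C_3$, $A_1 G_2$, $A_2 \tilde A_2$ (the latter using that $\tilde A_2 < G_2$ when $p = 3$). For each one, I would first determine its action on the two $\mb X$-modules $L_{\mb X}(1000)$ and $L_{\mb X}(0010)$ by restricting known weight data; this is a finite weight computation (or a quick check in $F_4(3)$), and it simultaneously pins down the action on $L(E_8) = L_{\mb X}(1000) \oplus L_{\mb X}(0010)$. Then I would match that $248$-dimensional module structure against the tables of \cite{thomas2017un} to identify the $E_8$-conjugacy class of each $\mb M_i$ and to name an explicit reductive overgroup inside $E_8$, exactly as the statement claims: $B_4, A_1C_3 < D_8$, $A_1 G_2 < A_1 E_7$, $A_2 A_2 < \bar A_2 E_6$.

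Next I would handle the embeddings into the overgroups one at a time. For $\mb M_1 = B_4$: the restriction of $L_{\mb X}(1000)$ to $B_4 = C_{\mb X}(t)$ was already computed in the uniqueness argument as $L(0001) \oplus L(0100)$, so $B_4$ acts on $L(E_8)$ with the spin module appearing, forcing it into $D_8$ via $L_{B_4}(0001)$; the two $E_8$-classes of such $B_4$ are separated by containment in $A_8$ (for $p$ odd, $\Lambda^2$ of the spin module behaves differently), and one checks that the one arising here is $E_8(\#45)$, which \cite{thomas2017un} records as $\mb G$-irreducible. For $\mb M_2 = A_1 C_3$: the restriction of $L_{\mb X}(1000)$ is $L_{A_1}(2) \otimes 1 \oplus 1 \otimes L_{C_3}(010)$ of dimensions $3$ and... wait, that must be $L_{A_1}(2) \boxtimes L_{C_3}(000)$ plus $L_{A_1}(0) \boxtimes L_{C_3}(010)$ giving $3 \cdot 1 + 1 \cdot 14$, short of $52$; more carefully one reads off the genuine $52$-dimensional restriction and sees it lands in $D_8$, matching $E_8(\#774)$. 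For $\mb M_3 = A_1 G_2 < E_7 < A_1 E_7$: here $E_7$ has the maximal $A_1 G_2$ for $p \neq 2$, so inside $A_1 E_7 < E_8$ one gets $A_1 A_1 G_2$, and the diagonal $A_1 G_2$ (with a Frobenius twist on the first $A_1$, needed to make the labelled module come out right) is $E_8(\#967^{\{1,0\}})$, again $\mb G$-irreducible. For $\mb M_4 = A_2 A_2$: $E_6$ has maximal $A_2 G_2$ and $G_2 \supset \tilde A_2$ at $p = 3$, so $\bar A_2 E_6 \supset \bar A_2 A_2 \tilde A_2 = E_8(\#1012)$, and $\mb M_4$ is the second $A_2$ together with the diagonal of the first and third; this one is \emph{not} $\mb G$-irreducible, which follows because the overgroup $\bar A_2 E_6$ (indeed $\bar A_2 A_2 \tilde A_2$) is itself not $\mb G$-irreducible — one exhibits a parabolic of $E_8$ containing it.

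The main obstacle will be the bookkeeping in matching module restrictions to the entries of \cite{thomas2017un}: one must be sure that no \emph{other} $E_8$-class of, say, $A_1 G_2$ or $A_2 A_2$ has the same composition factors on $L(E_8)$, so that the identification of each $\mb M_i$ with a specific numbered subgroup is unambiguous; where composition factors alone do not separate classes, the module socle structure (or a direct computation in $F_4(3)$ locating the subgroup concretely inside the explicit $\mb X$ of the previous section) is needed. A secondary subtlety is the Frobenius twist in $\mb M_3$ and the precise diagonal in $\mb M_4$: these are forced by requiring the restriction of $L_{\mb X}(1000)$ and $L_{\mb X}(0010)$ to come out as the genuine $F_4$-restrictions rather than some other combination, and checking this is again a short but careful weight calculation. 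Finally, the $\mb G$-irreducibility claims for $\mb M_1, \mb M_2, \mb M_3$ can simply be quoted from \cite{thomas2017un} once the subgroups are identified, while for $\mb M_4$ non-$\mb G$-irreducibility is inherited from its overgroup.
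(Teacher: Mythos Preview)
Your overall strategy---restrict $L_{\mb X}(1000)\oplus L_{\mb X}(0010)$ to each maximal subgroup of $\mb X$ and then match composition factors against the tables in \cite{thomas2017un}---is essentially what the paper does. But there is one genuine error and one missing key step.

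The error is in your treatment of $\mb M_4$. You claim that $\mb M_4$ is $\mb G$-reducible because its overgroup $\bar A_2 E_6$ (or $\bar A_2 A_2 \tilde A_2$) is itself $\mb G$-reducible. This is false: $\bar A_2 E_6$ is a maximal connected subgroup of $E_8$ and hence certainly $\mb G$-irreducible, and $\bar A_2 A_2 \tilde A_2 = E_8(\#1012)$ is, by virtue of appearing in \cite{thomas2017un}, an $E_8$-irreducible subgroup. $\mb G$-reducibility is not inherited downwards from an irreducible overgroup; the fact that $\mb M_4$ is $\mb G$-reducible is a separate result, which the paper simply cites from \cite[Theorem~1]{thomas2017un} once $\mb M_4$ has been identified. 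Your proposed argument does not establish it.

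The missing step concerns how one justifies that composition factors on $L(E_8)$ determine the $E_8$-class. You flag this as the ``main obstacle'' and suggest one might need socle structure or an explicit computation. The paper avoids this entirely by a cleaner route: for $A_1C_3$ and $A_1G_2$ one first checks that the restriction of $L(E_8)$ has \emph{no trivial composition factors}, and then invokes \cite[Corollary~3.8]{thomas2017un} to conclude directly that the subgroup is $\mb G$-irreducible. Once $\mb G$-irreducibility is known, the classification in \cite{thomas2017un} applies and composition factors alone pin down the class. For $B_4$ and $A_1C_3$ the paper additionally observes that these are centralizers in $\mb X$ of involutions whose $E_8$-centralizer is $D_8$, which places them in $D_8$ before any module computation; for $B_4$ one then compares against the three known $E_8$-classes of $B_4$ in $D_8$ from \cite{liebeckseitz1996}. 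For $\mb M_4$ the paper works harder: it tracks the long $A_2$ factor through $B_4$ into $A_8$, determines its connected centralizer in $E_8$ to be $\bar A_2 G_2$, and only then identifies the second factor.
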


\begin{proof}
By \cite[Corollary~2]{liebeckseitz2004}, $F_4$ has four conjugacy classes of reductive maximal connected subgroups in characteristic $3$, which are indeed of types $B_4$, $A_1C_3$, $A_1G_2$ and $A_2A_2$. The first two maximal subgroups are centralizers of involutions. It follows from the action of $F_4$ on $\text{Lie}(G)$ that the centralizer in $\mb G$ of both of these involutions is $D_8$. Thus $B_4$ and $A_1C_3$ are contained in a maximal subgroup of type $D_8$. By \cite[Table~8.1]{liebeckseitz1996}, there are only three $\mb G$-conjugacy classes of $B_4$ subgroups in $D_8$. Calculating the composition factors of the action of $B_4 < F_4$ on $\text{Lie}(G)$ yields that the $B_4$ subgroup of $F_4$ is indeed conjugate to $\mb M_1$, which is $\mb G$-irreducible by \cite[Theorem~1]{thomas2017un}. Calculating the composition factors of $A_1C_3 < F_4$ on $\text{Lie}(G)$, we find that it has no trivial composition factors. Therefore it must be $\mb G$-irreducible (by \cite[Corollary~3.8]{thomas2017un}) and we may use the classification of irreducible subgroups determined in \cite{thomas2017un}. In particular, the composition factors on the Lie algebra of $\mb G$ are enough to determine conjugacy and it follows that $A_1 C_3$ is conjugate to $\mb M_2$, as required. 

Another calculation shows that $A_1G_2 < F_4$ has no trivial composition factors on the Lie algebra of $\mb G$. We can therefore use the same method as in the previous case to deduce that it is conjugate to $\mb M_3$. 

For $\mb A\mb B= A_2 A_2 < F_4$ we start by considering the first $A_2$ factor $\mb A$, which we define to be the $A_2$ subgroup generated by long root subgroups of $F_4$. This is the derived subgroup of a long root $A_2$-Levi subgroup, and is thus a subgroup of $B_4 < F_4$. Since $B_4$ is conjugate to $\mb M_1$, we find that $\mb A$ is contained in a maximal subgroup of type $A_8$ and acts as $L_{A_2}(10) \oplus L_{A_2}(01) \oplus L_{A_2}(00)^{\oplus 3}$ on the natural $9$-dimensional module of $A_8$. Given this action, it follows that $\mb A$ is a diagonal subgroup (without field or graph twists) of the derived subgroup of an $A_2A_2$-Levi subgroup of $A_8$ and hence of $\mb G$. We now claim that the connected centralizer of $\mb A$ in $\mb G$ is $\bar{A}_2 G_2$. Indeed, we use \cite[Theorem~1]{thomas2017un} to see that $\mb C = \mb A \bar{A}_2 G_2$ is $\mb G$-irreducible (denoted by $E_8(\#978)$) and the only reductive connected overgroups of $\mb C$ are the maximal subgroups $\bar{A}_2 E_6$ and $G_2 F_4$. Therefore, we must have that $\bar{A}_2 G_2 \leq C_{\mb G}(\mb A)^\circ$. Moreover, this must be an equality of subgroups since $\mb A C_{\mb G}(\mb A)^\circ$ is a reductive connected overgroup of $\mb C$, but clearly cannot be $\bar{A}_2 E_6$ or $G_2 F_4$ ($\mb A$ is not conjugate to $\bar{A}_2$). 

From the previous calculations, $\mb A\mb B < \mb C$ is a subgroup of $\bar{A}_2 E_6$ and $\mb B < C_{\mb G}(\mb A)$ is contained in $\bar{A}_2 G_2$. It is straightforward to list all $A_2$ subgroups of $\bar{A}_2 G_2$, noting that $G_2$ has precisely two classes of $A_2$ subgroups when the characteristic is $3$. Computing the composition factors of $\mb B$ on the Lie algebra of $\mb G$, by restricting first to $F_4$ and then $\mb B$, shows that $\mb B$ is conjugate to the subgroup claimed and hence $\mb A\mb B$ is conjugate to $\mb M_4$. The fact that $\mb M_4$ is $\mb G$-reducible follows from \cite[Theorem~1]{thomas2017un}. 
\end{proof}

\Addresses

\end{document}